\documentclass[12pt]{article}
\usepackage{amssymb}
\usepackage{amsfonts}
\topmargin 0.3in
\textwidth 5.8in
\textheight 8.5in
\hoffset=-35pt
\voffset=-2pt

\title{Classification of some countable descendant-homogeneous digraphs}

\author{Daniela Amato, David M. Evans, University of East Anglia \\ and \\
John K. Truss, University of Leeds.}
\date{}
\begin{document}
\maketitle 
\newtheorem{lemma}{Lemma}[section]
\newtheorem{theorem}[lemma]{Theorem}
\newtheorem{corollary}[lemma]{Corollary}
\newtheorem{definition}[lemma]{Definition}
\newtheorem{note}[lemma]{Definition}
\newtheorem{remark}[lemma]{Remark}
\newtheorem{proposition}[lemma]{Proposition}
\newtheorem{conjecture}[lemma]{Conjecture}
\setcounter{footnote}{1}\footnotetext{This work was supported by EPSRC grant EP/G067600/1}
\setcounter{footnote}{2}\footnotetext{5 April 2011}
\newcounter{number}

\def\Aut{{\rm Aut}}
\def\C{\mathcal{C}}
\def\desc{{\rm desc}}
\def\N{\mathbb{N}}

\newenvironment{proof}[1][Proof]{\textbf{#1.} }{\ \rule{0.5em}{0.5em}}
\begin{abstract}
\noindent For finite $q$, we classify the countable, descendant-homogeneous digraphs in which the descendant set of any vertex is a $q$-valent tree. We also give   conditions on a rooted digraph $\Gamma$ which allow us to construct a countable descendant-homogeneous digraph in which the descendant set of any vertex is isomorphic to $\Gamma$.
 
2010 Mathematics Subject Classification: 05C20, 05C38, 20B27

\textit{Key words}. Digraphs, homogeneity
\end{abstract}

\renewcommand{\thetheorem}{\thesection.\arabic{theorem}}

\section{Introduction}
\subsection{ Background}

A countable digraph is homogeneous if any isomorphism between finite (induced) subdigraphs extends to an automorphism. The digraphs with this property 
are classified by Cherlin  in  \cite{cherlin}. By analogy, the notion of descendant-homogeneity was introduced in \cite{amato4}. A countable digraph 
is {\em descendant-homogeneous} if any isomorphism between finitely generated subdigraphs extends to an automorphism. Here, a subdigraph is finitely 
generated if its vertex set can be written as the descendant set of a finite set of vertices, that is, the set of vertices which are  reachable by a 
directed path from the set. 

Note that descendant-homogeneity can hold for trivial reasons: digraphs where the descendant set of any vertex is the whole 
digraph, or where no two vertices have isomorphic descendant sets are descendant-homogeneous. So it is reasonable to impose  restrictions such as 
vertex transitivity and no directed cycles. We refer to \cite{amato4} for further discussion. 

In this paper we are particularly interested in   vertex-transitive, descendant-homogeneous digraphs: so in this case, the descendant set of any vertex is isomorphic to some fixed digraph $\Gamma$. Examples of countable,  vertex-transitive, descendant-homogeneous digraphs where $\Gamma$ is a $q$-valent directed tree (for finite $q >1$)  were given in \cite{evans1}, \cite{amato4}. The main result of this paper is to show that the digraphs constructed in \cite{evans1} and 
\cite{amato4} constitute a complete list of all the countable descendant-homogeneous digraphs with descendant sets of this form (Theorem 
\ref{maintheorem}). In the final section of the paper, we give general conditions on $\Gamma$ under which there is a countable, vertex-transitive, descendant-homogeneous digraph in which the descendant set of any vertex is isomorphic to $\Gamma$. In particular, these conditions are satisfied by certain `tree-like' digraphs $\Gamma$ studied in \cite{amato1}. This gives new examples of descendant-homogeneous digraphs (and indeed, highly arc-transitive digraphs).
 
The first (non-trivial) examples of descendant-homogeneous digraphs known to the authors arose in the context of highly arc-transitive digraphs (those 
whose automorphism groups are transitive on the set of $s$-arcs for all $s$). In answer to a question of Cameron, Praeger, and Wormald in 
\cite{cameron}, the paper \cite{evans1} gave a construction of a certain highly arc-transitive digraph $D$ having an infinite binary tree as 
descendant set. The digraph was constructed as an example of a highly arc-transitive digraph not having the `property Z', meaning that there is no 
homomorphism from $D$ onto the natural digraph on $\mathbb Z$ (the doubly infinite path). However, we noted in \cite{amato4} that it is also 
descendant-homogeneous.  A more systematic analysis of this notion was carried out in \cite{amato4}, and further examples were given. The method of 
\cite{evans1} immediately applies to $q$-valent trees for any finite $q > 1$ in place of binary trees, but in addition, it is shown in \cite{amato4} 
that it is possible to omit certain configurations and still carry out a Fra\"{\i}ss\'e-type construction to give other examples of 
descendant-homogeneous digraphs whose descendant sets are $q$-valent trees. The classical Fra\"{\i}ss\'e theorem for relational structures provides a 
link between countable homogeneous structures (those in which any isomorphism between finite substructures extends to an automorphism) and 
amalgamation classes of finite structures. See \cite{cameron1}, \cite{cherlin} and \cite{hodges} for instance. The analogue of Fra\"{\i}ss\'e's  Theorem and the appropriate notion of amalgamation classes 
 which applies to descendant-homogeneity is given in Section \ref{FT}.

\subsection{Notation and Terminology}
Let $D$ a digraph with vertex and edge sets $VD$ and $ED$, and let $u \in VD$. For $s\geq 0$, an $s$\textit{-arc} in $D$ from $u_0$ to $u_s$  is a 
sequence $u_0u_1 \ldots u_s$ of $s+1$ vertices such that $(u_i, u_{i+1})\in ED$ for $0 \leq i < s$ and $u_{i-1} \neq u_{i+1}$ for $0 < i < s$. We let
\[
{\rm desc}^s(u) :=\{v\in VD\mid \mbox{there is an } s \mbox{-arc from }u \mbox{ to } v \},
\]
and ${\rm desc}(u)=\bigcup_{s\geq 0} {\rm desc}^{s}(u)$, the {\em descendant set} of $u$ (we also denote this by ${\rm desc}_D(u)$ if we need to 
emphasize that we are looking at descendants in $D$). If $X \subseteq VD$, we also let
\[
{\rm desc}^{s}(X):=\bigcup_{x\in X}{\rm desc}^{s}(x),  
\]
and similarly ${\rm desc}(X):=\bigcup_{x\in X} {\rm desc}(x)$. The `ball' of radius $s$ at $u$ is given by 
\[
B^s(u):=\bigcup_{0\leq i\leq s}{\rm desc}^{i}(u).
\]
 \smallskip

For a digraph $D$ we often write $D$ in place of $VD$ and use the same notation for a subset of the vertices and the full induced subdigraph. Henceforth, `subdigraph' will mean `full induced subdigraph' and an embedding of one digraph into another will always mean as a full induced subdigraph.  

We say that $A \subseteq D$ is {\itshape descendant-closed} in $D$, written $A \leq D$ if 
${\rm desc}_D(a) \subseteq A$ for all $a \in A$; and we say that an embedding $f:A \rightarrow B$ between digraphs is a 
$\leq$-{\itshape embedding} if $f(A)\leq B$. When  $A,B_1, B_2$ are digraphs we say that   $\leq$-embeddings $f_i:A \rightarrow B_i$  are {\itshape isomorphic} if there is an isomorphism $h: B_1\rightarrow B_2$ with $f_2=h \circ f_1$.

We say that $A \leq D$ is \textit{finitely generated} if there is a finite subset $X \subseteq A$ with $A = \desc_D(X)$; in this case we refer to $X$ as a generating set of $A$. If additionally no proper subset of $X$ is a generating set, then $X$ is called a minimal generating set. Clearly, in this case, no element in $X$ is a descendant of any other element of $X$.

The digraph $D$ is \textit{descendant-homogeneous} if whenever $f: A_1 \to A_2$ is an isomorphism between finitely generated descendant-closed subdigraphs of $D$, there is an automorphism of $D$ which extends $f$. The group of automorphisms of $D$ is denoted by $\Aut(D)$.

We shall 
mainly be concerned with digraphs $D$ where the descendant sets of single vertices are all isomorphic to a fixed digraph $\Gamma$: in this case we refer to 
$\Gamma$ as `the descendant set' of $D$.  A 
subset of a digraph is {\em independent} if the descendant sets of any two of its members are disjoint. In any digraph in which the descendant sets 
are all isomorphic, for any two finite independent subsets $X$ and $Y$, any bijection from $X$ to $Y$ extends to an isomorphism from ${\rm desc}(X)$ 
to ${\rm desc}(Y)$ since ${\rm desc}(X)$ and ${\rm desc}(Y)$ are both the disjoint union of $\vert X\vert$ descendant sets. 

Throughout we fix an integer $q > 1$ and write $T = T_q$ for the $q$-valent rooted tree. So $T$ has as its vertices the set of finite sequences from 
the set $\{0,\ldots,q-1\}$ and directed edges $(\bar{w}, \bar{w}i)$ (for $\bar{w}$ a finite sequence and $i \in \{0,\ldots, q-1\}$).

\section{Amalgamation classes}
\subsection{The Fra\"{\i}ss\'e Theorem}\label{FT}

As in \cite{amato4}, the correct context for the study of descendant-homogeneous 
digraphs is a suitable adaptation of Fra\"{\i}ss\'e's notion of \textit{amalgamation classes}. The reader who is familiar with this type of result (or with \cite{amato4}) and who is mainly interested in the main classification result, Theorem \ref{maintheorem}, could reasonably skip to the next subsection. The extra generality which is given here is only needed in the final section of the paper.

Let $\cal D$ be a class of (isomorphism types of) digraphs. 
Then $\cal D$ has the $\leq$-{\itshape amalgamation property} if the following holds: if $A$, $B_1$ 
and $B_2$ lie in $\cal D$, and $\leq$-embeddings $f_1$ and $f_2$ of $A$ into each of $B_1$  and $B_2$ are given, then there are a structure $C \in 
{\cal D}$ and $\leq$-embeddings $g_1$ and $g_2$ of $B_1$ and $B_2$ respectively into $C$ such that $g_1\circ f_1 = g_2 \circ f_2$. We say that $g_1, g_2$ solve the amalgamation problem given by $f_1, f_2$. 

\begin{remark} \label{freedef}\rm Suppose $A$, $B_1$, and $B_2$ are digraphs and $\leq$-embeddings $f_1$ and $f_2$ of $A$ into each of $B_1$  and $B_2$ are given. We can clearly find a solution $g_i: B_i \to C$ with the property that $C = g_1(B_1)\cup g_2(B_2)$, $g_1(B_1) \cap g_2(B_2) = g_1(f_1(A))$ and every directed edge is contained  in $g_1(B_1)$ or $g_2(B_2)$. Moreover, this solution is uniquely determined up to isomorphism by the $f_i$. Informally, we can regard the $f_i$  as inclusion maps and take $C$ to be the disjoint union of  $B_1$ and $B_2$ over $A$. We make this into a digraph by taking as edge set  $EC = EB_1\cup EB_2$. It is easy to see that $B_1, B_2 \leq C$ and the inclusion maps $g_i :B_i \to C$ satisfy $g_1\circ f_1 = g_2 \circ f_2$. We say that the solution $g_i : B_i \to C$ to the problem $f_i : A \to B_i$ is the \textit{free amalgam} of the $f_i$. When $f_1, f_2$ are inclusion maps (or are understood from the context) we shall abuse this terminology and say that $C$ is the free amalgam of $B_1$ and $B_2$ over $A$.

Note that if $B_1, B_2 \leq C$ then $B_1 \cup B_2 \leq C$ and $B_1\cup B_2$ is the free amalgam of $B_1$ and $B_2$ over $B_1\cap B_2$: there can be no directed edges between elements of $B_1\setminus B_2$ and $B_2 \setminus B_1$ as $B_1, B_2$ are descendant-closed.

When we come to count structures and embeddings up to isomorphism (as in  Lemma \ref{countable}), it will be useful to have a more precise notation for free amalgamation. Suppose in the above that  $f_1$ is inclusion and $f_2$ is an arbitrary $\leq$-embedding $f_2:A \rightarrow B_2$.  The free amalgam  $B_1 {\ast}_{f_2} B_2$ has as vertex set the disjoint union of $B_1\setminus A$ and $B_2$ (and the `obvious' directed edges). The embedding $g_2 : B_2 \to B_1 \ast_{f_2} B_2$ is inclusion and the embedding  $g_1 : B_1 \to B_1\ast_{f_2} B_2$ is given by $g_1(b) = b$ if $b \in B_1 \setminus A$ and $g_1(b) = f_2(b)$ if $b \in A$. 

We remark that in general, if $A \leq B_1$ and $f_2 , f_2' : A \to B_2$ are $\leq$-embeddings with the same image, then $B_1 \ast_{f_2} B_2$ and $B_1 \ast_{f_2'} B_2$ need not be isomorphic.
\end{remark}

The analogue of Fra\"{\i}ss\'e's Theorem which we use is the following.

\begin{theorem} \label{2.1} Suppose $M$ is a countable descendant-homogeneous digraph. Let ${\cal C}$ be the class of digraphs which are isomorphic to finitely generated $\leq$-subdigraphs of $M$. Then 
\begin{enumerate}
\item[{\rm (1)}]  ${\cal C}$ is a class of countable, finitely generated digraphs which is closed under isomorphism and  has countably many isomorphism types; 
\item[{\rm (2)}] ${\cal C}$ is closed under taking finitely generated $\leq$-subdigraphs;
\item[{\rm (3)}] ${\cal C}$ has the $\leq$-amalgamation property;
\item[{\rm (4)}] for all $A,B \in {\cal C}$ there are only countably many isomorphism types of $\leq$-embeddings from $A$ to $B$.\end{enumerate}
Conversely, if ${\cal C}$ is a class of  digraphs satisfying (1)-(4), then there is a countable descendant-homogeneous digraph $M$ for which the class of digraphs isomorphic to finitely generated $\leq$-subdigraphs of $M$ is equal to  ${\cal C}$. Moreover, $M$ is determined up to isomorphism by ${\cal C}$. 
 \end{theorem}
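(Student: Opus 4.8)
The plan is to adapt the classical proof of Fra\"{\i}ss\'e's theorem, replacing ``finite'' by ``finitely generated'' and ordinary embeddings by $\leq$-embeddings throughout; the only genuinely new feature is that isomorphism types of \emph{embeddings}, not just of structures, must be counted, which is where condition (4) is needed. For the forward direction, let $M$ be countable and descendant-homogeneous and let $\mathcal C$ be the class of finitely generated $\leq$-subdigraphs of $M$. Property (1) is immediate, since each such subdigraph is determined by a finite generating subset of $VM$ and is itself countable. Property (2) follows from transitivity of $\leq$ together with the fact that $\desc_A(Y) = \desc_M(Y)$ whenever $A \leq M$ and $Y \subseteq A$. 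For (4), a $\leq$-embedding $f : A \to B$ is determined by its values on a finite generating set $X$ of $A$, because $f(A) = \desc_B(f(X))$ once $f(A) \leq B$, and there are at most $|VB|^{|X|} \le \aleph_0$ such maps. For the amalgamation property (3), given $\leq$-embeddings $f_i : A \to B_i$ with all three digraphs in $\mathcal C$, I would realise $B_1$ and $B_2$ as $\leq$-subdigraphs of $M$ and use descendant-homogeneity of $M$ to apply an automorphism moving one copy of $A$ onto the other; the union of the moved copy of $B_1$ with the copy of $B_2$ is then a finitely generated $\leq$-subdigraph of $M$ which solves the amalgamation problem.

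For the converse, call a digraph $N$ \emph{$\mathcal C$-rich} if all of its finitely generated $\leq$-subdigraphs lie in $\mathcal C$ and, for every finitely generated $A \leq N$ and every $\leq$-embedding $f : A \to B$ with $B \in \mathcal C$, there is a $\leq$-embedding $g : B \to N$ with $g \circ f$ the inclusion of $A$ into $N$. Two facts then do the work. First, a back-and-forth argument shows that any isomorphism between finitely generated $\leq$-subdigraphs of two countable $\mathcal C$-rich digraphs extends to an isomorphism of the whole digraphs: the one-step extension, given $h : A \to A'$ and a vertex $x$, passes to $A^{+} = \desc_N(X \cup \{x\})$ (finitely generated and descendant-closed, hence in $\mathcal C$) and applies richness of the target digraph to the composite of $h^{-1}$ with the inclusion $A \to A^{+}$. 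In particular, a countable $\mathcal C$-rich digraph is descendant-homogeneous, and any two countable $\mathcal C$-rich digraphs are isomorphic. Second, if $M'$ is countable and descendant-homogeneous and its finitely generated $\leq$-subdigraphs are exactly $\mathcal C$, then $M'$ is itself $\mathcal C$-rich: given $f : A \to B$ as above, realise $B$ as some $B_0 \leq M'$, observe that $f$ carries $A$ to a finitely generated $\leq$-subdigraph of $B_0$, and use descendant-homogeneity of $M'$ to find an automorphism conjugating $B_0$ so that this copy of $A$ is moved back onto $A$. These two facts give the uniqueness assertion and reduce existence to the construction of a countable $\mathcal C$-rich digraph.

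For that construction I would build an increasing chain $M_0 \leq M_1 \leq \cdots$ of members of $\mathcal C$, starting with $M_0$ empty, and set $M = \bigcup_n M_n$. An \emph{amalgamation task} is a pair consisting of a finitely generated $A \leq M_n$ and a $\leq$-embedding $f : A \to B$ into some $B \in \mathcal C$, considered up to isomorphism of $f$; to \emph{resolve} it at a stage $s \ge n$ means to take $M_{s+1}$ to be an amalgam (supplied by (3), and again finitely generated) of $M_s$ and $B$ over $A$ via $f$, so that $B$ acquires a $\leq$-embedding into $M_{s+1}$ fixing $A$ pointwise. By (1) each $M_n$ has only countably many finitely generated $\leq$-subdigraphs, and then by (1) and (4) only countably many tasks are associated with it, so a standard diagonal bookkeeping over pairs (stage, task index) arranges that every task that ever arises is resolved at some later stage. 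Then $M$ is countable, since each $M_n$ is; its finitely generated $\leq$-subdigraphs lie in $\mathcal C$ by (2), each of them being already a finitely generated $\leq$-subdigraph of some $M_n \in \mathcal C$; conversely every $B \in \mathcal C$ $\leq$-embeds into $M$ by resolving the task $(\emptyset, \emptyset \to B)$; and the extension property is precisely what the bookkeeping builds in. The step I expect to be the main obstacle is this bookkeeping: one must verify that (1) and (4) really do make the collection of relevant tasks countable, so that a single diagonal enumeration meets all of them, and one must check throughout that the auxiliary digraphs occurring --- sets $\desc_N(X \cup \{x\})$, unions of $\leq$-subdigraphs, and amalgams --- are genuinely finitely generated and descendant-closed, so that hypotheses (1)--(4) apply to them.
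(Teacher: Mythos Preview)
Your argument for condition (4) in the forward direction contains a genuine gap. You assert that a $\leq$-embedding $f : A \to B$ is determined by its values on a finite generating set $X$ of $A$, but this is false: the equality $f(A) = \desc_B(f(X))$ only pins down the \emph{image} of $f$, not the map itself. For a concrete counterexample, take $A = B = T_q$ with $X = \{\alpha\}$ the root; then $\Aut(T_q)$ is uncountable, and every automorphism of $T_q$ is a $\leq$-embedding $T_q \to T_q$ sending $\alpha$ to $\alpha$. So there are uncountably many embeddings (not just $|VB|^{|X|}$), and the counting you propose does not go through. Worse, even \emph{isomorphism types} of embeddings with a fixed restriction to $X$ need not coincide: if $f' = f \circ \sigma$ with $\sigma \in \Aut(A)$ fixing $X$ pointwise, then $f$ and $f'$ are isomorphic only when $f\sigma f^{-1} \in \Aut(f(A))$ extends to an automorphism of $B$, and nothing you have said rules out uncountably many cosets here.

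The paper's proof of (4) uses descendant-homogeneity of $M$ in an essential way, via the reformulation (4$'$): for $A \leq B \leq M$ finitely generated, the subgroup $H \leq \Aut(A)$ of automorphisms extending to $\Aut(B)$ has countable index. The argument is that if $g_1, g_2 \in \Aut(A)$ lie in distinct $H$-cosets, one extends each $g_i$ (by descendant-homogeneity) to $k_i \in \Aut(M)$ and checks that $k_1(B) \neq k_2(B)$; since $B$ is finitely generated and $M$ is countable, there are only countably many possible images $k(B)$, bounding the index of $H$. This is precisely the step where homogeneity of $M$ does real work beyond (1)--(3), and your proposal bypasses it with an incorrect shortcut. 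The rest of your outline --- the proofs of (1), (2), (3), the notion of $\mathcal C$-richness, the back-and-forth, and the chain construction with bookkeeping --- is correct and matches the paper's approach.
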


We refer to a class $\cal C$ of digraphs satisfying (1)-(4) as a $\leq$-amalgamation class. The digraph $M$ determined by $\cal C$ as in the theorem  is called the Fra\"{\i}ss\'e limit of $(\cal C, \leq)$. 

\begin{remark}\rm \label{rem2}
It is easy to see that in place of (4) we can substitute the condition: \textit{
\begin{enumerate}
\item[{ \rm(4$'$)}] if $A \leq B \in {\cal C}$ and $A$ is finitely generated, then the subgroup of the automorphism group ${\rm Aut}(A)$ consisting of automorphisms which extend to automorphisms of $B$ is of countable index in ${\rm Aut}(A)$. 
\end{enumerate}}
Indeed, we wish to consider the number of $\leq$-embeddings $f : A \to B$ up to isomorphism. As $B$ is countable and $A$ is finitely generated there are countably many possibilities for the image $f(A)$, so it will be enough to count isomorphism types of $\leq$-embeddings  with fixed finitely generated image $Y \leq B$. Let $H$ be the subgroup of $\Aut(Y)$ consisting of automorphisms which extend to automorphisms of $B$. It is straightforward to show that if $f, f' : A \to B$ have image $Y$, then $f, f'$ are isomorphic if and only if the map $g \in \Aut(Y)$ given by $g(y) = f'(f^{-1}(y))$ is in $H$. Thus there is a bijection between the $H$-cosets in $\Aut(Y)$ and the isomorphism types.
\end{remark}

\begin{remark}\rm The proof of Theorem \ref{2.1} is reasonably standard, but we make some comments on the condition (4). First, suppose $M$ and $\C$ are as in the statement. We show that (4$'$) in Remark \ref{rem2} holds. So let $A \leq B \in \C$ and $H \leq \Aut(A)$ be the automorphisms of $A$ which extend to automorphisms of $B$, as in (4$'$). We may assume $B \leq M$. Suppose $g_1, g_2 \in \Aut(A)$ lie in different $H$-cosets. As $M$ is $\leq$-homogeneous we can extend $g_i$ to $k_i \in \Aut(M)$. Then $k_1(B) \neq k_2(B)$. Otherwise $h = k_2^{-1}k_1$ stabilizes $B$ and gives an automorphism of $B$ which extends $h = g_2^{-1}g_1$; this implies $h \in H$ and $g_2H = g_1H$, which is a contradiction. As there are only countably many possibilities for the image of $B$ under automorphisms of $M$, it follows that $H$ is of countable index in $\Aut(A)$, as required.

The converse is a fairly standard construction, and can be read off from from Theorem 2.18 of \cite{kirby}, which in turn is adapted from Theorem 1.1 of \cite{DG}. However, we give a few details of the proof. So suppose we have a class $\C$ of finitely generated digraphs satisfying (1)-(4). We construct a countable chain $C_1 \leq C_2 \leq C_3 \leq \ldots$ of digraphs in $\C$ with the property that if $A \leq C_i$ is finitely generated and $f : A \to B \in \C$ is a $\leq$-embedding, then there is $j \geq i$ and a $\leq$-embedding $g : B \to C_j$ with $g(f(a)) = a$ for all $a \in A$. The resulting digraph $\bigcup_i C_i$ will be descendant-homogeneous, by a back-and-forth argument. Note that by (4), we have only countably many $f$ to consider (for any particular $A$). For if $f,g$ are as above and $f': A \to B$ is isomorphic to $f$ with $f' = h\circ f$ for $h \in \Aut(B)$, then $g' = g\circ h^{-1} : B \to C_j$ satisfies $g'(f'(a)) = a$ for all $a \in A$.
\end{remark}

\subsection{The classification result}

Recall that $q \geq 2$ is an integer and $T = T_q$ is the $q$-valent rooted tree. We shall classify countable, descendant-homogeneous digraphs $M$ in which the descendant sets of vertices are isomorphic to $T$. Thus, by Theorem \ref{2.1}, we need to classify $\leq$-amalgamation classes of finitely generated digraphs with descendant sets isomorphic to $T$.  In this case,  we can replace the condition (4) in Theorem \ref{2.1} by the simpler condition:
\textit{
\begin{enumerate}
\item[{\rm (4$''$)}] if $a_1,a_2 \in B\in {\cal C}$, then ${\rm desc}_B(a_1) \cap {\rm desc}_B(a_2)$ is finitely generated
\end{enumerate}}
\noindent as in Theorem 3.4 of \cite{amato4}. Indeed, if ${\cal C}$ satisfies (4$''$) then (4$'$) is a special case of Lemma \ref{countable} here. Conversely, if (4$'$) holds, then to see (4$''$) let $B={\rm desc}(a_1) \cap {\rm desc}(a_2)$ and $A={\rm desc}(a_1)$. Let $X$ be the minimal generating set for $A \cap {\rm desc}(a_2)$. Then $X$ is independent and any automorphism of $B$ which stabilizes $A$ must fix $X$ setwise. On the other hand, if $Z$ is an infinite independent subset of $A$ it is easy to see that the stabilizer of $Z$ in ${\rm Aut}(A)$ is of index continuum (as there are continuum many translates of $Z$ by automorphisms of $A$, since $A$ is a regular rooted tree).

\medskip

Thus we work with the class $\cal C= {\cal C}_{\infty}$ consisting of  all digraphs $A$ satisfying the following conditions:  
 \begin{itemize}
 \item for all $a \in A$, ${\rm desc}(a)$ is isomorphic to $T$; 
\item $A$ is finitely generated;
\item for $a,b \in A$, the intersection ${\rm desc}(a)\cap {\rm desc}(b)$ is finitely generated.
\end{itemize}

Then $\cal C$ satisfies conditions (1), (2), (4) in Theorem \ref{2.1} (cf. the above remarks and Lemma \ref{countable}), and we are interested in the subclasses of $\C$ which satisfy (3).  It is easy to see that $\C$ satisfies $(3)$: in fact $\cal C$   is closed under  free 
amalgamation. It follows that $({\cal C}, \leq)$ is a $\leq$-amalgamation class. The Fra\"{\i}ss\'e limit $D_{\infty}$ of $(\cal C, \leq)$ is the countable descendant-homogeneous digraph  constructed  in \cite{evans1}. 

\smallskip

For $n \geq 2$, we now define the amalgamation classes ${\cal C}_n \subseteq {\cal C}$ (from \cite{amato4}). Let ${\cal T}_n$ be the element of $\cal C$ generated by $n$ elements $x_1, \ldots, x_n$, such that ${\rm desc}^1(x_i) = 
{\rm desc}^1(x_j)$ for all $i \neq j$. So ${\cal T}_n$ is like the tree $T$, except that there are $n$ root vertices (all having the same 
out-vertices). Let ${\cal C}_n$ consist of the digraphs $A \in {\cal C}$ such that  ${\cal T}_n$ does not embed in $A$ (as a descendant-closed 
subdigraph).

It is clear that ${\cal C}_n \subseteq {\cal C}_{n+1}$ and ${\cal C}_n \subseteq {\cal C}$ for all $n$. In \cite{amato4} it is shown that 
$({\cal C}_n, \leq)$ is a $\leq$-amalgamation class, though it is clearly not a free 
amalgamation class. In particular, when we `solve' an amalgamation problem $f_i : A \to B_i$ by maps $g_i : B_i \to C$, we may have 
$g_1(B_1) \cap g_2(B_2) \supset g_1(f_1(A))$. Informally, this means that  points of $B_1$, $B_2$ outside $A$ may need to become identified in the 
amalgam $C$.

For $n \geq 2$, let $D_n$ be the Fra\"{\i}ss\'e limit of  $({\cal C}_n,\leq)$, as in Theorem \ref{2.1}. Then $D_n$ is a countable 
descendant-homogeneous digraph. Our main result is:
\begin{theorem}\label{maintheorem}
Let $D$ be a countable descendant-homogeneous digraph whose descendant sets are isomorphic to $T$. Then $D$ is isomorphic to $D_n$ for some $n \in \{2, \ldots, \infty\}$. 
\end{theorem}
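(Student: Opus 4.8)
By Theorem~\ref{2.1}, it suffices to show that the only $\leq$-amalgamation subclasses of $\C$ are the $\C_n$ for $n \in \{2,\ldots,\infty\}$; since $D$ has descendant sets isomorphic to $T$, the class $\C_D$ of finitely generated $\leq$-subdigraphs of $D$ is some subclass of $\C$, and we must identify which ones can occur. The plan is to analyse how the forbidden configuration $\mathcal{T}_n$ interacts with amalgamation. First I would observe that, for any $\leq$-amalgamation class $\E \subseteq \C$, the set of $n$ for which $\mathcal{T}_n \notin \E$ is upward closed: if $\mathcal{T}_n$ does not embed in any member of $\E$, then neither does $\mathcal{T}_m$ for $m < n$, since $\mathcal{T}_m$ embeds as a $\leq$-subdigraph of $\mathcal{T}_n$ (take $m$ of the $n$ roots and close under descendants — this still has all roots sharing their out-set). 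Hence there is a well-defined $n \in \{2,\ldots,\infty\}$ with $\mathcal{T}_k \in \E$ exactly for $k < n$ (with $n = \infty$ meaning all $\mathcal{T}_k \in \E$, and we should check $n \geq 2$, i.e. that $\mathcal{T}_1 = T \in \E$ always, which is forced since descendant sets of vertices lie in the class). This gives $\E \subseteq \C_n$.

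The substance is the reverse inclusion: $\C_n \subseteq \E$, i.e. every digraph $A \in \C$ not embedding $\mathcal{T}_n$ actually occurs as a $\leq$-subdigraph of $D$ (equivalently, lies in $\E$). The idea is to build $A$ up from copies of $T$ by a sequence of amalgamations carried out inside $\E$, and to show that $\mathcal{T}_n$-freeness is exactly what guarantees we never leave $\E$. Concretely, take a minimal generating set $x_1,\ldots,x_r$ of $A$; then $A$ is obtained from the disjoint union $\bigsqcup_i \desc_A(x_i)$ (each a copy of $T$, hence in $\E$) by successively identifying descendants according to the overlaps $\desc_A(x_i)\cap \desc_A(x_j)$, which by the third defining condition of $\C$ are finitely generated. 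Each such identification should be realizable as an instance of $\leq$-amalgamation in $\E$: we amalgamate the partial digraph built so far with the next copy of $T$ over the common finitely generated descendant-closed subdigraph. Since $\E$ is closed under $\leq$-amalgamation, the result stays in $\E$ — provided the amalgam the construction forces is the "right" one. Here is where $\mathcal{T}_n$ enters: when $\E$ solves an amalgamation problem it may be compelled to identify extra vertices (as noted for $\C_n$ in the excerpt), so one must check that the amalgam produced inside $\E$ is isomorphic to the target piece of $A$ and not some further collapse. The point is that an "unwanted" extra identification of out-vertices would force $n'$ roots over a common out-set for some $n' \ge n$ somewhere in $D$, i.e. would embed $\mathcal{T}_{n'}$ with $n' \ge n$ — but then $\mathcal{T}_n \in \E$ (as $\E$ is closed under $\leq$-subdigraphs and $\mathcal{T}_n \le \mathcal{T}_{n'}$), contradicting the definition of $n$. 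Conversely, since $A$ itself does not embed $\mathcal{T}_n$, no such collapse is "needed", and a careful bookkeeping of which vertices get identified at each stage should show the amalgam in $\E$ matches $A$ exactly.

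I would organize this as an induction on $|A \setminus \{x_1,\ldots,x_r\}|$ restricted to "new" vertices added, or more cleanly on a suitable complexity measure (number of generators, then size of the pairwise overlaps): at each step we have $A$ expressed as a $\leq$-amalgam $B_1 \ast_{f} B_2$ over a finitely generated common part, with $B_1, B_2$ of smaller complexity hence in $\E$ by induction, and the base case is a single copy of $T$. The inductive step requires knowing that the amalgam of $B_1$ and $B_2$ over their common descendant-closed subdigraph, computed inside $\E$, equals $A$; this is the crux and should follow from: (i) the amalgam embeds $B_1$ and $B_2$ over the common part, so there is a surjection from it onto a subdigraph of $D$ realizing... — more precisely, one shows $A$ itself is a legitimate solution to the amalgamation problem within $\E$ (it lies in $\C$, embeds $B_1,B_2$ appropriately, and does not embed $\mathcal{T}_n$), and then uses that the Fra\"iss\'e limit $D$, being $\leq$-homogeneous with age $\E$, realizes every member of $\E$ — in particular $A$ — as a $\leq$-subdigraph. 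Actually this last observation short-circuits much of the difficulty: once we know $A \in \E$ as an abstract digraph it is automatically a $\leq$-subdigraph of $D$; so the entire task reduces to showing $A \in \E$, and the induction above is precisely the argument that $\C_n \subseteq \E$.

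The main obstacle I anticipate is the inductive step's verification that no spurious identifications occur, i.e. that expressing $A$ as $B_1 \ast_f B_2$ and amalgamating inside $\E$ returns $A$ rather than a quotient. This needs a precise description of the amalgamation in $\E$ (which, unlike free amalgamation in $\C$, can collapse vertices) together with the combinatorial fact that any forced collapse of out-neighbourhoods produces a copy of some $\mathcal{T}_{n'}$, $n' \ge n$, so is blocked exactly when $n'$ would have to be $\ge n$; pinning down "the minimal forced collapse is controlled by the largest $\mathcal{T}_k$ that embeds in $A$" is the technical heart. A secondary subtlety is ensuring $A \in \C$ throughout, i.e. that the third condition ($\desc(a)\cap\desc(b)$ finitely generated) is preserved — but this is given for the target $A$ and should descend to the pieces $B_1, B_2$ built from it.
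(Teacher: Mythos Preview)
Your overall architecture matches the paper's: reduce via Theorem~\ref{2.1} to classifying $\leq$-amalgamation subclasses $\mathcal{D}\subseteq\mathcal{C}$, define $n$ by the least $\mathcal{T}_n\notin\mathcal{D}$, and prove $\mathcal{C}_n\subseteq\mathcal{D}$ by induction on the number of generators of $A\in\mathcal{C}_n$, writing $A$ as the free amalgam of $A_1=\desc(a_1,\ldots,a_{s-1})$ and $T=\desc(a_s)$ over $A_0=A_1\cap T$. (One small slip: your justification for ``$\{k:\mathcal{T}_k\notin\mathcal{E}\}$ is upward closed'' has the implication reversed. From $\mathcal{T}_m\leq\mathcal{T}_n$ one gets $\mathcal{T}_n\in\mathcal{E}\Rightarrow\mathcal{T}_m\in\mathcal{E}$, hence $\mathcal{T}_m\notin\mathcal{E}\Rightarrow\mathcal{T}_n\notin\mathcal{E}$ for $n>m$; your sentence states the converse.)

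The genuine gap is the inductive step, and the mechanism you propose for it does not work. You suggest that ``$A$ itself is a legitimate solution to the amalgamation problem within $\mathcal{E}$'': but this is exactly what you are trying to prove, so it cannot be invoked. You also suggest that an unwanted identification in the $\mathcal{D}$-amalgam of $A_1$ and $T$ would \emph{create} a copy of $\mathcal{T}_{n'}$ with $n'\geq n$. The direction is wrong. If a $q$-set $p\subseteq U$ has $k$ common predecessors in $A_1$ and $f(p)$ has one predecessor $y'$ in $T$, then in the free amalgam $A$ the set $p$ has $k+1$ predecessors, and since $A\in\mathcal{C}_n$ this gives $k\leq n-2$. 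An amalgam in $\mathcal{D}$ that identifies $y'$ with one of the $A_1$-predecessors of $p$ has \emph{fewer} predecessors, not more; it is perfectly consistent with $\mathcal{T}_n$-freeness, so nothing you have said rules it out. The amalgamation property only promises \emph{some} amalgam in $\mathcal{D}$, and you have no a priori control over which one.

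The paper's resolution is a device you are missing entirely: \emph{extend before you amalgamate}. One first enlarges $A_1$ to some $B\in\mathcal{D}$ in which every $q$-subset $p\subseteq U$ with at most $n-2$ predecessors in $A_1$ acquires an additional predecessor $w_p\in B\setminus A_1$ (this uses $\mathcal{T}_{n-1}\in\mathcal{D}$ and an amalgamation, Lemma~\ref{4.3}). One also enlarges $T$ to $T'\geq T$ and throws in a complement $I$ of $U$ in $A_1$ (Lemma~\ref{4.1}) matched with an independent set $J$ in a disjoint branch of $T'$. One then replaces the overlap $\desc(U)=\desc(V)$ by the larger $\desc(U'\cup I)=\desc(V'\cup J)$, where $U',V'$ are obtained by pushing each such $p$ up to $w_p$ and $f(p)$ up to its unique $T$-predecessor (Lemma~\ref{4.2}). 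Now amalgamate $B$ and $T'$ over this larger overlap. The point is that any identification the $\mathcal{D}$-amalgam might impose between a vertex of $\desc(b)\setminus\desc(V)$ and a predecessor of some $p$ is now forced to hit $w_p\in B\setminus A_1$, not a vertex of $A_1$; and the complement $I$ blocks any other route into $A_1$. This is made precise in Lemma~\ref{4.6}, which shows $A_1\cap g_2(\desc(b))=\desc(U)$ in the resulting $D\in\mathcal{D}$, so that $A_1\cup g_2(\desc(b))\cong A$ sits inside $D$ and hence $A\in\mathcal{D}$. Your proposal has the right skeleton but lacks this diversion trick, without which the induction cannot close.
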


\bigskip

\section{Proof of the main theorem}

We know from \cite{amato4} that each ${\cal C}_n \subseteq {\cal C}$ is a $\leq$-amalgamation class. From now on we shall consider an arbitrary 
subclass $\cal D$ of $\cal C$ which is itself a $\leq$-amalgamation class (that is, satisfies (1)-(4) of Theorem \ref{2.1}), with the goal of showing 
that $\cal C$ and ${\cal C}_n$ are the only possibilities for $\cal D$. 

To understand the argument better, suppose that there is some integer $n \geq 2$ such that ${\cal T}_n \notin {\cal D}$. Choose $n$ as small as 
possible: so in particular,  ${\cal T}_{n-1} \in {\cal D}$ (where ${\cal T}_1 = T$) and  ${\cal D} \subseteq {\cal C}_{n}$. To prove our main result 
it suffices to show that if $A \in {\cal C}_n$ then $A \in {\cal D}$, and this is done by induction on the number of generators of $A$. Let $\{a_1, 
\ldots a_k\}$ be the minimal generating set of $A$ and let $A_1$ be the descendant-closed subdigraph of $A$ with generating set $\{a_1, \ldots, 
a_{k-1}\}$. Let $A_0 = A_1 \cap {\rm desc}(a_k)$. Then $A$ is the  free amalgam of $A_1$ and ${\rm desc}(a_k)$ over  $A_0$. By the induction 
hypothesis, $A_1 \in {\cal D}$, and we know  that ${\rm desc}(a_k) \cong T\in {\cal D}$. So there are $C \in {\cal D}$ and $\leq$-embeddings $f: A_1 
\rightarrow C$ and $g:T \rightarrow C$ such that $f(a)=g(a)$ for all $a \in A_0$ (identifying ${\rm desc}(a_k)$ with $T$). However, {\itshape{a 
priori}} one cannot force $C$ to be the free amalgam. So we replace $A_1$ by some $B \geq A_1$, $T$ by $T' \geq T$ and $A_0$ by $A_0' \leq B,T'$ in 
such a way that the amalgam in ${\cal D}$ of $B$ and $T'$ over $A_0'$ is forced to be free. This is the point of Lemmas 4.1, 4.2 and 4.3 (which do not 
need the extra assumption on ${\cal D}$).

\begin{lemma}\label{4.1}
Let $A \in {\cal C}$ and $X$ be a finite independent subset of $A$. Then there is a finite independent subset $Y$ of $A$ containing $X$ such that 
$A \setminus {\rm desc}(Y)$ is finite. 
\end{lemma}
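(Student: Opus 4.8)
The plan is to induct on the number of generators of $A$. Let $\{a_1,\dots,a_k\}$ be a minimal generating set of $A$. First I would handle the base case $k=1$: here $A=\desc(a_1)\cong T$, so $A$ is a $q$-valent rooted tree. If $X=\emptyset$ we may take $Y$ to be any singleton contained in $A$ (say $\{a_1\}$, giving $A\setminus\desc(a_1)=\emptyset$), so assume $X\neq\emptyset$. Since $X$ is independent, each $x\in X$ lies at some finite distance from the root $a_1$; let $s$ be the maximum of these distances. The set $\desc^s(a_1)$ is a finite antichain (it is the set of vertices of $T$ at level $s$), it is independent, and $B^{s}(a_1)=A\setminus\desc(\desc^s(a_1))$ is finite. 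Moreover every element of $X$ is a descendant of (or equal to, if it sits exactly at level $s$) some element of $\desc^s(a_1)$; so $Y$ cannot simply be $\desc^s(a_1)$ since we need $X\subseteq Y$ and $Y$ independent. Instead, for each $x\in X$ at level $<s$, replace the element of $\desc^s(a_1)$ above it by $x$ together with the level-$s$ vertices of $T$ below $a_1$ but not below $x$; concretely, take $Y$ to be the set of minimal elements of $X\cup(\desc^s(a_1)\setminus\bigcup_{x\in X}\desc(x))$. Then $Y\supseteq X$, $Y$ is independent (any two distinct elements are incomparable by construction and their descendant sets are disjoint), and $A\setminus\desc(Y)$ is contained in $B^s(a_1)$, hence finite.

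For the inductive step, let $A_1$ be the descendant-closed subdigraph generated by $\{a_1,\dots,a_{k-1}\}$, so $A=A_1\cup\desc(a_k)$ and this is a free amalgam over $A_0=A_1\cap\desc(a_k)$, which is finitely generated since $A\in\C$. Split $X=X_1\cup X_k$ where $X_1=X\cap A_1$ and $X_k=X\setminus A_1\subseteq\desc(a_k)\setminus A_0$ (using independence of $X$ to see these behave correctly). Apply the inductive hypothesis to $A_1$ with the finite independent set $X_1$ together with the minimal generating set $X_0$ of $A_0$ (note $X_1\cup X_0$ is still independent, or can be refined to an independent set containing $X_1$ whose descendant set contains $\desc(A_0)$) to get a finite independent $Y_1\subseteq A_1$ with $X_1\cup X_0\subseteq \mathrm{desc\text-closed\ stuff}$ and $A_1\setminus\desc(Y_1)$ finite. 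Apply the base case inside $\desc(a_k)\cong T$ to the finite independent set $X_k\cup X_0$ (viewing $X_0\subseteq\desc(a_k)$) to get a finite independent $Y_k\subseteq\desc(a_k)$ containing $X_k$ with $\desc(a_k)\setminus\desc(Y_k)$ finite. Then set $Y$ to be an independent refinement of $(Y_1\cup Y_k)$ — more precisely, discard from $Y_1$ any element whose descendant set is already inside $\desc(Y_k)$ and vice versa, keeping all of $X$ — and check $A\setminus\desc(Y)\subseteq (A_1\setminus\desc(Y_1))\cup(\desc(a_k)\setminus\desc(Y_k))$ is finite.

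The main obstacle is bookkeeping the independence of $Y$ across the amalgam: because $A_0$ need not be a single descendant set, a descendant set $\desc(y_1)$ for $y_1\in Y_1$ can meet a descendant set $\desc(y_k)$ for $y_k\in Y_k$ precisely inside $\desc(A_0)$, so naively unioning $Y_1$ and $Y_k$ destroys independence. The fix is to arrange, when applying the inductive hypothesis, that the chosen $Y_1$ and $Y_k$ each \emph{refine} the generating set $X_0$ of $A_0$ — that is, every element of $\desc(A_0)$ lies below an element of $Y_1$ that is itself below some $x_0\in X_0$, and likewise for $Y_k$ — so that the overlap $\desc(Y_1)\cap\desc(Y_k)$ is exactly controlled by $X_0$ and the offending elements can be pruned cleanly. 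This is possible because in a $q$-valent tree any finite antichain can be ``pushed down'' past any other finite set while staying independent, which is exactly what the base case gives us. Once independence is secured, finiteness of $A\setminus\desc(Y)$ is immediate from the two inductive finiteness statements and the fact that $A=A_1\cup\desc(a_k)$.
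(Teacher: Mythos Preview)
Your inductive approach is genuinely different from the paper's proof, which is a direct depth argument: the paper observes that for each pair $a,x$ the minimal generators of $\desc(a)\cap\desc(x)$ lie in some finite ball $B^{n(a,x)}(a)$, takes $N$ larger than all the relevant $n(a_i,a_j)$ and $n(a_i,x)$, and then simply sets $Y$ to be $X$ together with the maximal elements of $A\setminus(B\cup\desc(X))$, where $B=\bigcup_i B^N(a_i)$. No induction on generators, no amalgam bookkeeping.

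Your base case is fine, but the inductive step has a real gap in the ``fix'' paragraph. You propose to arrange that $Y_1$ refines $X_0$, meaning every element of $A_0$ lies below some $y\in Y_1$ with $y\in\desc(X_0)$. This is incompatible with $X_1\subseteq Y_1$ and independence of $Y_1$ whenever there exists $x\in X_1\setminus A_0$ with $\desc(x)\cap A_0\neq\varnothing$: any such $x$ lies in $Y_1$ but not in $\desc(X_0)$, yet some $a\in\desc(x)\cap A_0$ would also have to lie below another $y\in Y_1\cap\desc(X_0)$, forcing $\desc(x)\cap\desc(y)\ni a$. The same obstruction undermines the earlier parenthetical ``or can be refined to an independent set containing $X_1$ whose descendant set contains $\desc(A_0)$''.

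The approach can be rescued, but not as cleanly as you suggest. One has to push $Y_k$ deep enough in the tree $\desc(a_k)$ to lie past the (finitely many) generators of each $\desc(x)\cap\desc(a_k)$ for $x\in X_1$, and symmetrically control the elements of $Y_1\setminus X_1$ relative to $\desc(x_k)\cap A_1$ for $x_k\in X_k$; the latter cannot be done by ``pushing down'' since $A_1$ is not a tree, so one ends up re-deriving something very close to the paper's depth observation anyway. At that point the induction is doing little work and the direct argument is both shorter and cleaner.
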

\begin{proof}
Let $a,x \in VA$ and let $S$ be the minimal generating set of ${\rm desc}(a) \cap {\rm desc}(x)$. Since $S$ is finite, there is $n(a,x) \in 
{\mathbb N}$ such that $S \subseteq B^{n(a,x)}(a)$. Let $m \geq n(a,x)$, $y \in {\rm desc}^m(a)$ and $y \notin {\rm desc}(x)$. Then ${\rm desc}(y) 
\cap {\rm desc}(x) = \varnothing$: if not, let $u \in {\rm desc}(y) \cap {\rm desc}(x)$. As $y \in {\rm desc}(a)$, $u \in {\rm desc}(a) \cap {\rm 
desc}(x)$, so $u \in {\rm desc}(s)$ for some $s \in S$. As ${\rm desc}(a)$ is a tree, and by the choice of $m$, $y \in {\rm desc}(s) \subseteq {\rm 
desc}(x)$, which is a contradiction.

Let $a_1, \ldots, a_r$ be the minimal generating set for $A$. Now let $N \geq max\{n(a_i, a_j), \linebreak n(a_i, x) \mid i\neq j, x \in X\}$ and let 
$B=\bigcup_{i=1}^r B^N(a_i)$. So if $y \in A \setminus \left( B \cup {\rm desc}(X)\right)$ then ${\rm desc}(y) \cap {\rm desc}(X) =\varnothing$ and if 
$y_1, y_2 \in A \setminus \left( B \cup {\rm desc}(X)\right)$, and neither is a descendant of the other, then ${\rm desc}(y_1)\cap {\rm desc}(y_2) = 
\varnothing$. Let $y_1, \ldots, y_t$ be the maximal elements of $A \setminus \left( B \cup {\rm desc}(X)\right)$. Then $Y:= X \cup \{y_1, \ldots, 
y_t\}$ is independent and $A \setminus {\rm desc}(Y) \subseteq B$ is finite. 
\end{proof}
\bigskip

For a finite independent subset $X$ of $A$, and $Y$ given by the lemma, we say that $Y \setminus X$ is a {\itshape{complement}} of $X$ in $A$. 

\medskip

For $X \subseteq D \in {\cal C}$, a \textit{common predecessor} for $X$ in $D$ is a vertex $a \in D$ such that $(a,x)$ is a directed edge for all $x \in X$. 
Let $A \in {\cal C}$ and let $U, V$ be independent subsets of $A$ and $T$ respectively with $f: {\rm desc}(U) \rightarrow {\rm desc}(V)$ an 
isomorphism. Let $Q$ be the set consisting of those $q$-element subsets $p$ of $U$ such that $p$ has a common predecessor in $A$ and $f(p)$ has a 
common predecessor in $T$. For $p \in Q$, let $w_p$ and $w_{f(p)}$  be such common predecessors of $p$ and $f(p)$ respectively. We note that as $T$ 
is a tree, $w_{f(p)}$ is uniquely determined, but $w_p$ may not be. Also, as $T$ is a tree, any two members of $Q$ are disjoint. Now let 
\[
U':= \left( U \setminus \bigcup Q \right) \cup \{w_p\mid p \in Q\} \;\;\; \mbox{and} \;\;\; 
V':=\left( V \setminus \bigcup f(Q) \right) \cup \{w_{f(p)}\mid p \in Q\}
\]
In words, $U'$ is obtained from $U$ by replacing the vertices in $p \subseteq U$ by their common predecessors $w_p$, for all $p \in Q$. Similarly $V'$ is obtained from $V$. Clearly $|U'| = |V'|$, ${\rm desc}(U) \subseteq {\rm desc}(U')$ and ${\rm desc}(V) \subseteq {\rm desc}(V')$.  Moreover, 

\begin{lemma}\label{4.2}\smallskip
{\rm (a)} The sets $U'$ and $V'$ are independent subsets of $A$ and $T$ respectively, and the extension $F$ of $f$ which takes $w_p$ to $w_{f(p)}$ for each $p 
\in Q$ is an isomorphism from ${\rm desc}(U')$ to ${\rm desc}(V')$; 

{\rm (b)} if $I \subseteq A$ is disjoint from $U$ and $U \cup I$ is an independent subset of $A$, then $U' \cup I$ is also independent. 
\end{lemma}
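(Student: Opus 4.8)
The statement splits into two parts, and the plan is to handle them by unpacking the definitions and using that $T$ (and hence each descendant set) is a tree.

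For part (a), the main thing is to verify independence of $U'$ and $V'$; once we know these sets are independent, the fact that $F$ is an isomorphism from ${\rm desc}(U')$ to ${\rm desc}(V')$ is essentially automatic. Indeed, as noted in the excerpt, in any digraph whose descendant sets are all isomorphic, any bijection between finite independent sets extends (uniquely) to an isomorphism of the generated descendant sets, because both sides are disjoint unions of copies of $T$. So we just need $F$ to be \emph{well-defined} as a bijection $U' \to V'$ extending $f$ appropriately: the elements of $U \setminus \bigcup Q$ are sent by $f$ to $V \setminus \bigcup f(Q)$ (here we should check that $f$ maps $U \setminus \bigcup Q$ onto $V\setminus \bigcup f(Q)$, which follows since $f$ restricts to a bijection $\bigcup Q \to \bigcup f(Q)$), and each $w_p$ is sent to $w_{f(p)}$. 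So the crux is: why are $U'$ and $V'$ independent? For $V'$ this is where being a tree is used crucially --- I would argue that for $p \in Q$, ${\rm desc}(w_{f(p)})$ is the union of ${\rm desc}$ of $p$'s image together with the root $w_{f(p)}$ itself (since in the $q$-valent tree $T$, a vertex with a full set of $q$ out-neighbours has ${\rm desc}(w) = \{w\} \cup \bigcup_{x \in {\rm desc}^1(w)} {\rm desc}(x)$, and $f(p)$ \emph{is} the full set ${\rm desc}^1(w_{f(p)})$ because $|f(p)| = q$ and all its elements are out-neighbours of $w_{f(p)}$); distinct $w_{f(p)}$ have disjoint descendant sets because distinct members of $Q$ are disjoint and the $w_{f(p)}$ are then forced to be distinct and non-comparable in the tree $T$. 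For $U'$ the same argument gives disjointness of ${\rm desc}(w_p)$ from ${\rm desc}$ of the remaining elements of $U$, using that $p = {\rm desc}^1(w_p)$ again (as $|p| = q$ and $w_p$ is a common predecessor, $p$ must be \emph{all} the out-neighbours of $w_p$ since ${\rm desc}(w_p) \cong T$ is $q$-valent), and disjointness of distinct ${\rm desc}(w_p)$ for $p, p' \in Q$: here one must rule out $w_p \in {\rm desc}(w_{p'})$, which would force some element of $p$ to be a descendant of some element of $p'$ (or of $w_{p'}$), contradicting independence of $U$ together with $p, p'$ being disjoint --- this is the one spot needing a small case analysis in the tree ${\rm desc}(w_{p'})$.

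For part (b), suppose $I$ is disjoint from $U$ with $U \cup I$ independent, and we want $U' \cup I$ independent. The elements of $U' \setminus U$ are the $w_p$, $p \in Q$. Since ${\rm desc}(w_p) = \{w_p\} \cup \bigcup_{x \in p}{\rm desc}(x)$ as above, independence of $U' \cup I$ reduces to: (i) the ${\rm desc}(w_p)$ are pairwise disjoint --- already handled in (a); (ii) ${\rm desc}(w_p) \cap {\rm desc}(u) = \varnothing$ for $u \in U \setminus \bigcup Q$ --- also handled in (a); (iii) ${\rm desc}(w_p) \cap {\rm desc}(z) = \varnothing$ for $z \in I$. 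For (iii), if they met, then either $w_p \in {\rm desc}(z)$, or ${\rm desc}(z)$ meets some ${\rm desc}(x)$ with $x \in p$; the latter contradicts independence of $U \cup I$ directly. In the former case, $w_p \in {\rm desc}(z)$ would give $x \in {\rm desc}(w_p) \subseteq {\rm desc}(z)$ for $x \in p \subseteq U$, again contradicting independence of $U \cup I$. And of course ${\rm desc}(u) \cap {\rm desc}(z) = \varnothing$ for $u \in U \setminus \bigcup Q$, $z \in I$ is immediate from independence of $U \cup I$.

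The main obstacle I anticipate is the bookkeeping in part (a) around why the \emph{predecessor} vertices behave: specifically establishing the identity ${\rm desc}(w) = \{w\} \cup \bigcup_{x \in {\rm desc}^1(w)}{\rm desc}(x)$ and that $p$ (resp.\ $f(p)$), being a $q$-element set of out-neighbours of $w_p$ (resp.\ $w_{f(p)}$) inside a descendant set isomorphic to the $q$-valent tree $T$, must be \emph{all} of ${\rm desc}^1(w_p)$ (resp.\ ${\rm desc}^1(w_{f(p)})$). Once that structural fact is in hand, every disjointness claim is a short argument that pushes a hypothetical common descendant down to contradict independence of $U$ (or of $U \cup I$), using that the ambient descendant sets are trees so that comparability is well-behaved. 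None of this requires the extra hypothesis on $\mathcal{D}$, consistent with the remark preceding Lemma \ref{4.1}.
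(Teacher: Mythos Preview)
Your approach is essentially the same as the paper's: both arguments hinge on the identity ${\rm desc}(w_p) = \{w_p\} \cup {\rm desc}(p)$ (which holds because $|p|=q$ forces $p = {\rm desc}^1(w_p)$), and then do the same case analysis on pairs of elements of $U'$ to check independence. Part (b) is also argued the same way in both.

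There is one slip worth flagging. You write that any bijection between finite independent sets extends \emph{uniquely} to an isomorphism of the generated descendant sets, and use this to conclude that $F$ is an isomorphism ``essentially automatically''. The uniqueness claim is false: each copy of $T$ has many automorphisms, so a bijection $U' \to V'$ has many extensions to an isomorphism ${\rm desc}(U') \to {\rm desc}(V')$. The general fact therefore only gives \emph{some} isomorphism agreeing with $F$ on $U'$, not that the specific map $F$ (which is required to extend $f$ on all of ${\rm desc}(U)$) is an isomorphism. The fix is immediate from material you already have: since ${\rm desc}(U') = {\rm desc}(U) \cup \{w_p : p \in Q\}$ and ${\rm desc}(V') = {\rm desc}(V) \cup \{w_{f(p)} : p \in Q\}$, the map $F$ adds only finitely many new points to $f$, and the only new edges are $(w_p, x)$ with $x \in p$, which $F$ sends to the edges $(w_{f(p)}, f(x))$. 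This is exactly how the paper handles it, in one sentence.
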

\begin{proof}
(a) Let $u_1$ and $u_2$ be distinct members of $U'$. If neither lies in $\{w_p \mid p \in Q\}$, then they are in $U$, so ${\rm desc}(u_1) \cap 
{\rm desc}(u_2) = \varnothing$ is immediate. Next suppose that $u_1 = w_p$ for $p \in Q$ and $u_2 \not \in \{w_p \mid p \in Q\}$. Then 
${\rm desc}(u_1) = \{u_1\} \cup {\rm desc}(p)$, and as $U$ is independent, ${\rm desc}(u) \cap {\rm desc}(u_2) = \varnothing$ for each $u \in p$, and 
also $u_1 \not \in {\rm desc}(u_2)$, and it follows that ${\rm desc}(u_1) \cap {\rm desc}(u_2) = \varnothing$. Finally, if $u_1 = w_{p_1}$ and $u_2 = 
w_{p_2}$, then ${\rm desc}(u_1) = \{u_1\} \cup {\rm desc}(p_1)$ and ${\rm desc}(u_2) = \{u_2\} \cup {\rm desc}(p_2)$. Now for each $u \in p_1$ and $u' \in p_2$, ${\rm desc}(u) \cap {\rm desc}(u') = \varnothing$ by the independence of $U$, and $u_1 \not \in {\rm desc}(p_2)$ and $u_2 \not \in 
{\rm desc}(p_1)$ are clear, from which it follows that ${\rm desc}(u_1) \cap {\rm desc}(u_2) = \varnothing$. This shows that $U'$ is independent, and 
the proof that $V'$ is independent is similar. 

To see that $F$ is an isomorphism, note that the only new points in its domain are $w_p$, and $F$ maps $w_p$ to $w_{f(p)}$, and 
$f({\rm desc}^1(w_p)) = {\rm desc}^1(w_{f(p)})$. 

\smallskip

(b) Since ${\rm desc}(w_p) = \{w_p\} \cup {\rm desc}(p)$ for each $p \in Q$, and $p \subseteq U$ and $U \cup I$ is independent, it follows that 
${\rm desc}(w_p) \cap {\rm desc}(x) = \varnothing$ for all $x \in I$, so $U' \cup I$ is also independent. 
\end{proof}

\begin{lemma}\label{4.3}
Let $A \in {\cal D}$ and let $U$ be a finite independent subset of $A$. Let $M$ be the maximal number of common predecessors in $A$ of $q$-element 
subsets of $U$, and let $N \geq M$ be such that ${\cal T}_N \in {\cal D}$. Then there is $B \in {\cal D}$ with $A \leq B$ and such that every 
$q$-element subset of $U$ has at least $N$ common predecessors in $B$. 
\end{lemma}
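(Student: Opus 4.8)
The plan is to add the required common predecessors one $q$-element subset of $U$ at a time, each addition being obtained by amalgamating (inside ${\cal D}$) with a fresh copy of ${\cal T}_N$, and then to iterate over the finitely many $q$-element subsets of $U$.

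Fix first a single $q$-element subset $p$ of $U$. Since $U$ is independent, so is $p$, and hence $\desc_A(p)$ is the disjoint union of $q$ copies of $T$ (there are no edges between distinct $\desc_A(x)$, $x\in p$, as these sets are descendant-closed). In ${\cal T}_N$ the descendant set of the $q$ common out-vertices of the $N$ roots is likewise a descendant-closed disjoint union of $q$ copies of $T$, so there is a $\leq$-embedding $f:\desc_A(p)\to {\cal T}_N$ carrying $p$ onto those $q$ out-vertices. Now $\desc_A(p)\in{\cal D}$ (it is a finitely generated $\leq$-subdigraph of $A$, using Theorem \ref{2.1}(2)), $A\in{\cal D}$, and ${\cal T}_N\in{\cal D}$ by hypothesis, so we may apply the $\leq$-amalgamation property of ${\cal D}$ to the inclusion $\desc_A(p)\leq A$ and to $f$. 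This yields $C\in{\cal D}$ and $\leq$-embeddings $g_1:A\to C$, $g_2:{\cal T}_N\to C$ agreeing on $\desc_A(p)$. Identifying $A$ with $g_1(A)\leq C$ we get $A\leq C$, and the images under $g_2$ of the $N$ roots of ${\cal T}_N$ are $N$ distinct vertices of $C$ (as $g_2$ is an embedding), each of which is a common predecessor of $g_2(f(p))=g_1(p)=p$. Thus $p$ has at least $N$ common predecessors in $C$.

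To finish, enumerate the $q$-element subsets of $U$ as $p_1,\dots,p_k$ (finitely many, since $U$ is finite) and set $B_0=A$. Inductively, given $B_{i-1}\in{\cal D}$ with $A\leq B_{i-1}$ in which each of $p_1,\dots,p_{i-1}$ already has at least $N$ common predecessors, observe that $A\leq B_{i-1}$ forces $\desc_{B_{i-1}}(x)=\desc_A(x)$ for $x\in A$, so $p_i$ is still an independent $q$-subset of $B_{i-1}$; running the previous paragraph with $(B_{i-1},p_i)$ in place of $(A,p)$ produces $B_i\in{\cal D}$ with $B_{i-1}\leq B_i$ in which $p_i$ has at least $N$ common predecessors. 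Since $\leq$ is transitive and directed edges of $B_{i-1}$ remain directed edges of the induced subdigraph $B_i$, we keep $A\leq B_i$ and the common predecessors of $p_1,\dots,p_{i-1}$ persist. Then $B:=B_k$ has all the required properties.

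The one point that requires care is that the amalgam $C$ need not be free, so a priori the $N$ new roots could be identified with vertices already present in $A$. This causes no trouble: $g_2$ is an embedding, so the $N$ roots stay pairwise distinct in $C$, and a short computation with descendant sets (equating $\desc_C(g_2(r))=\{g_2(r)\}\cup g_1(\desc_A(p))$ with $g_1(\desc_A(v))$) shows that any such identification can only be with a vertex $v\in A$ which is itself a common predecessor of $p$ — which only helps the count. The remaining verifications (transitivity of $\leq$, preservation of edges and of independence under $\leq$-extensions, finiteness of the set of $q$-element subsets of $U$) are routine book-keeping; note that the argument uses the hypothesis only through ${\cal T}_N\in{\cal D}$.
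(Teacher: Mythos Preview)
Your proof is correct and follows essentially the same approach as the paper: enumerate the $q$-element subsets of $U$, and successively amalgamate with a copy of ${\cal T}_N$ over ${\rm desc}(p_i)$ to supply the required common predecessors. The paper's proof omits your final paragraph entirely; the observation that $g_2$ is an embedding already gives $N$ distinct common predecessors of $p$ in $C$, and whether any of them coincide with vertices of $A$ is irrelevant to the count, so the case analysis there, while correct, is not needed.
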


\begin{proof}
Let $P = \{p_1, \ldots, p_t\}$ be the set of all $q$-element subsets of $U$. (Note that, unlike in the previous proof, the members of $P$ need not be 
pairwise disjoint.) We construct a sequence $B_0 \leq B_1 \leq B_2 \leq \ldots 
\leq B_t$ in $\cal D$, such that $p_i$ has at least $N$ common predecessors in $B_l$ for all $i \leq l$ and $l \le t$. We start with $B_0: = A$ and  assume inductively that we have constructed $B_l$, where $l < t$. Let 
$p_{l+1} = \{u_{1}, \ldots, u_{q}\}$ and consider a copy of ${\cal T}_N$ with generating set $G = \{g_1, \ldots, g_N\}$. Let ${\rm desc}^1(G) = 
\{h_1, \ldots, h_q\}$. Both sets $\bigcup_{j=1}^q{\rm desc}(u_{j})$ and $\bigcup_{j=1}^q{\rm desc}(h_j)$ are the union of $q$ disjoint copies of 
$T$, so there is an isomorphism taking the first to the second such that $u_{j}$ is sent to $h_j$ for each $j$. Let $B_{l+1}$ be an amalgam in 
$\cal D$ of $B_l$ and ${\cal T}_N$ with $\bigcup_{j=1}^q{\rm desc}(u_{j})$ and $\bigcup_{j=1}^q{\rm desc}(h_j)$ identified by this isomorphism 
(since $B_l, {\cal T}_N \in {\cal D}$). We note that $p_{l+1}$ has at least $N$ common predecessors in $B_{l+1}$ since $\{h_1, \ldots, h_q\}$ has $N$ 
common predecessors in ${\cal T}_N$. Hence $B = B_t$ is a member of $\cal D$ as required. \end{proof}

\begin{proposition}
Let ${\cal D} \subseteq {\cal C}$ be a $\leq$-amalgamation class and suppose that ${\cal T}_m \notin {\cal D}$ for some $m \geq 2$. Then ${\cal D} = 
{\cal C}_n$ where $n$ is the least  m such that ${\cal T}_m \notin {\cal D}$.
\end{proposition}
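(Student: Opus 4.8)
The plan is to prove the two inclusions $\mathcal{D}\subseteq\mathcal{C}_n$ and $\mathcal{C}_n\subseteq\mathcal{D}$ separately, where $n$ is the least $m$ with $\mathcal{T}_m\notin\mathcal{D}$. For $\mathcal{D}\subseteq\mathcal{C}_n$: by minimality of $n$, $\mathcal{T}_{n-1}\in\mathcal{D}$ (with $\mathcal{T}_1=T$), and if some $A\in\mathcal{D}$ had $\mathcal{T}_n$ as a descendant-closed subdigraph, then since $\mathcal{C}$ is closed under finitely generated $\leq$-subdigraphs and $\mathcal{D}$ satisfies condition (2) of Theorem~\ref{2.1}, we would get $\mathcal{T}_n\in\mathcal{D}$, a contradiction. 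Hence no $A\in\mathcal{D}$ contains $\mathcal{T}_n$, i.e.\ $\mathcal{D}\subseteq\mathcal{C}_n$.

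The substantive direction is $\mathcal{C}_n\subseteq\mathcal{D}$, which I would prove by induction on the size $k$ of a minimal generating set of $A\in\mathcal{C}_n$, exactly along the lines sketched in the paragraph preceding Lemma~\ref{4.1}. The base case $k=1$ is immediate since $\mathrm{desc}(a)\cong T=\mathcal{T}_1\in\mathcal{D}$. For the inductive step, write the minimal generating set as $\{a_1,\ldots,a_k\}$, let $A_1$ be the $\leq$-subdigraph generated by $\{a_1,\ldots,a_{k-1}\}$, and let $A_0=A_1\cap\mathrm{desc}(a_k)$, so that $A$ is the free amalgam of $A_1$ and $\mathrm{desc}(a_k)\cong T$ over $A_0$. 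By induction $A_1\in\mathcal{D}$ and $T\in\mathcal{D}$, so $\mathcal{D}$ gives \emph{some} amalgam $C\in\mathcal{D}$ of $A_1$ and $T$ over $A_0$; the problem is that $C$ need not be the free amalgam, and since $\mathcal{D}\subseteq\mathcal{C}_n$ every $q$-element subset of $A_0$ can acquire at most $n-1$ common predecessors in $C$. The key idea is to enlarge the objects before amalgamating: use Lemma~\ref{4.3} to find $B\geq A_1$ and $T'\geq T$ in $\mathcal{D}$ in which a suitable independent generating set of $A_0$ already has $N\geq n-1$ (in fact $N=n-1$, since $\mathcal{T}_N\in\mathcal{D}$ for $N\le n-1$ but not for $N=n$) common predecessors over the image of $A_0$, so that in any amalgam of $B$ and $T'$ over $A_0$ lying in $\mathcal{D}\subseteq\mathcal{C}_n$, no \emph{new} common predecessors of those $q$-subsets can appear — forcing the amalgam of $B$ and $T'$ to be free over $A_0$ in the relevant part. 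Lemma~\ref{4.1} is used to control the ``tail'' of $A_0$ (replacing $A_0$ by a finitely-generated-complement version so that only finitely many $q$-subsets need to be saturated with predecessors), and Lemma~\ref{4.2} is used to move between the generating set $U$ of $A_0$ and the modified set $U'$ obtained by replacing $q$-subsets with common predecessors while keeping the relevant isomorphism and independence properties. Then $A$, being (isomorphic to) a finitely generated $\leq$-subdigraph of this free amalgam, lies in $\mathcal{D}$ by condition (2).

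I would structure the inductive step as: (i) pick a minimal independent generating set for $A_0$ inside $\mathrm{desc}(a_k)\cong T$ and transport it to $A_1$ via the identification, (ii) apply Lemma~\ref{4.1} on both sides to cut down to finitely many $q$-element subsets to worry about — the ones that could, in a non-free amalgam, be given extra common predecessors — while absorbing the rest into a finite remainder, (iii) apply Lemma~\ref{4.3} to $A_1$ and to $T$ (the copy of $\mathrm{desc}(a_k)$) separately to get $B,T'\in\mathcal{D}$ in which every such $q$-subset already has $n-1$ common predecessors, (iv) amalgamate $B$ and $T'$ over $A_0$ in $\mathcal{D}$, (v) observe that since the result is in $\mathcal{C}_n$ and the relevant $q$-subsets are already maximally saturated, the amalgam restricted to the descendant sets of $A_0$'s generators is forced to be free, so $A\le(\text{this amalgam})$ is an honest free amalgam sitting $\leq$-inside a member of $\mathcal{D}$; hence $A\in\mathcal{D}$.

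The main obstacle I anticipate is step (v): carefully verifying that ``no new common predecessors can appear'' genuinely forces the amalgam to be the \emph{free} amalgam of the two descendant sets, rather than merely forcing the $q$-subsets of $U'$ to behave. One must check that any identification of points of $B\setminus A_0$ with points of $T'\setminus A_0$ in an amalgam forces, by going down to a predecessor level and using that descendant sets are $q$-valent trees, the creation of a common predecessor of some $q$-element subset of (a descendant-closed extension of) $A_0$ exceeding the allowed bound $n-1$ — and this is where the combination of Lemmas~\ref{4.1}, \ref{4.2}, \ref{4.3} and the structure of $T$ must be marshalled precisely. A secondary point requiring care is bookkeeping: ensuring the independent sets and their ``complements'' produced by Lemma~\ref{4.1} on the two sides match up under the isomorphism $A_0\to A_0$, so that Lemma~\ref{4.3} can be applied compatibly and the free amalgam of the enlarged objects still contains a $\leq$-copy of the original $A$.
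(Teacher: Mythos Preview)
Your overall plan is right: the inclusion $\mathcal{D}\subseteq\mathcal{C}_n$ is immediate, and $\mathcal{C}_n\subseteq\mathcal{D}$ goes by induction on the size of a minimal generating set, writing $A$ as the free amalgam of $A_1$ and a copy of $T$ over $A_0=\mathrm{desc}(U)$, enlarging before amalgamating, and then arguing that the part of the amalgam we care about is forced to be free. But the specific mechanism you describe for the enlargement and for step (v) is not what the paper does, and as you describe it the argument would not close.

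First, you misread the role of Lemma~\ref{4.1}. The generating set $U$ of $A_0$ is already finite (this is condition (4$''$)), so there are only finitely many $q$-subsets of $U$ to begin with; there is no ``tail of $A_0$'' to control. The paper uses Lemma~\ref{4.1} for a different purpose: to produce a complement $I$ of $U$ in $A_1$ with $A_1\setminus\mathrm{desc}(U\cup I)$ finite, and then sends $I$ to an independent set $J$ lying in a branch $\mathrm{desc}(b')$ of the enlarged tree $T'$ which is \emph{disjoint} from $\mathrm{desc}(b)$. This is what forces, in the amalgam, any putative identification point $\gamma\in A_1\setminus\mathrm{desc}(U)$ to satisfy $\mathrm{desc}(\gamma)\cap\mathrm{desc}(I)=\varnothing$, hence $\mathrm{desc}(\gamma)\setminus\mathrm{desc}(U)$ finite, which is how one finds a $q$-subset $p\subseteq U$ with a common predecessor $y\in\mathrm{desc}(\gamma)\cap A_1$.

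Second, the paper does \emph{not} apply Lemma~\ref{4.3} to $T$. The object $T'$ is simply a larger copy of the tree, obtained by putting a new root $z$ above $b$ so that one has access to a sibling branch $\mathrm{desc}(b')$. Keeping $T'$ a tree is essential: the contradiction in Lemma~\ref{4.6} uses that a $q$-element subset of $T'$ has at most one common predecessor, so that $y'$ is forced to equal $w_{f(p)}$, whence $y=g_2(y')=w_p$. Your ``saturate both sides to $n-1$ predecessors'' idea destroys this uniqueness, and I do not see how to recover the contradiction from saturation alone: nothing stops the amalgam from identifying vertices in $B\setminus A_0$ with vertices in $T''\setminus A_0$ in a way that merely matches up already-existing predecessors rather than creating a new one.

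Third, the actual use of Lemmas~\ref{4.2} and \ref{4.3} is more delicate than you indicate. One applies Lemma~\ref{4.3} only to $A_1$, obtaining $B\in\mathcal{D}$ in which each $p$ in the set $P'$ (those $q$-subsets of $U$ having between $1$ and $n-2$ common predecessors in $A_1$ and whose image $f(p)$ has a common predecessor in $T'$) acquires a common predecessor $w_p\in B\setminus A_1$. Lemma~\ref{4.2} then replaces $U$ by $U'$ (and $V$ by $V'$) so that the amalgamation is performed over $\mathrm{desc}(U'\cup I)\cong\mathrm{desc}(V'\cup J)$ with these new $w_p$'s included. The point is that $w_p$ is deliberately chosen \emph{outside} $A_1$, so when the tree-uniqueness argument forces $y=w_p$ one gets $y\in A_1$ and $y\notin A_1$ simultaneously. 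Your sketch has the lemmas in roughly the right places but the wrong jobs assigned to them; in particular the contradiction in (v) does not come from a count of common predecessors exceeding $n-1$, but from this location clash.
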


\begin{proof}
Note that ${\cal D} \subseteq {\cal C}_n$ and ${\cal T}_{n-1} \in {\cal D}$. We shall show that ${\cal C}_n \subseteq {\cal D}$. Let 
$A \in {\cal C}_n$. We use induction on the number of generators of $A$ to show that $A \in {\cal D}$. Let $a_1, \ldots, a_s$ be the (distinct) 
generators of $A$. If $s=1$, or if $A$ is the disjoint union of finitely many copies of $T$, then $A$ embeds in $T$ and therefore $A \in {\cal D}$, 
since $T \in {\cal D}$. Now let $s \geq 2$ and suppose that $E \in {\cal D}$ for all $E\in {\cal C}_n$ with at most $s-1$ generators. Let 
$A_1:=\bigcup_{i=1}^{s-1}{\rm desc}(a_i)$ and let $T$ be a copy of the $q$-valent tree with $b$ its root. The digraph $A$ is the free amalgam of $A_1$ 
and ${\rm desc}(a_s) \; (\cong T$) over $ A_1 \cap {\rm desc}(a_s)$ (which is finitely generated). So there are independent subsets $U = \{u_1, 
\ldots, u_k\}$ and $V = \{v_1, \dots, v_k\}$ of $A_1$ and $T = {\rm desc}(b)$ respectively and an isomorphism $f$ from ${\rm desc}(U)$ to 
${\rm desc}(V)$ (taking $u_i$ to $v_i$ for all $i$), such that $A$ is isomorphic to the free amalgam $C$ of $A_1$ and $T$ with ${\rm desc}(U)$ and 
${\rm desc}(V)$ identified by $f$. See Figure 1. To prove the result it then suffices to show that there is $D \in {\cal D}$ embedding $C$. We shall 
first `expand' $A_1$ to a digraph $B \in {\cal D}$ (using Lemma \ref{4.3}) and then amalgamate $B$ with a copy $T'\geq T$ of $T$ over the descendant 
sets of some carefully chosen independent subsets. The resulting digraph is then the required digraph $D$. 
 
\begin{figure}[htbp]
\begin{center}
\setlength{\unitlength}{5mm}
\linethickness{0.2mm}

\begin{picture}(10,7)(0,0)

\put(-3,6){\circle*{0.2}}
\put(-1,6){\circle*{0.2}}
\put(4,6){\circle*{0.2}}
\put(10,6){\circle*{0.2}}
\put(-3,7){\makebox(0,0)[t]{{\small $a_1$}}}
\put(-1,7){\makebox(0,0)[t]{{\small $a_2$}}}
\put(4,7){\makebox(0,0)[t]{{\small $a_{s-1}$}}}
\put(10,7){\makebox(0,0)[t]{{\small $b$}}}

\put(-3,6){\line(-1,-3){1.7}}
\put(-3,6){\line(1,-2){1}}
\put(-1,6){\line(-1,-2){1}}
\put(-1,6){\line(1,-3){1.7}}
\put(4,6){\line(-1,-3){1.7}}
\put(4,6){\line(1,-3){1.7}}
\put(10,6){\line(-1,-3){1.7}}
\put(10,6){\line(1,-3){1.7}}

\put(1,6){\circle*{0.1}}
\put(1.5,6){\circle*{0.1}}
\put(2,6){\circle*{0.1}}

\put(2,3){\circle*{0.12}}
\put(-0.5,3){\circle*{0.12}}
\put(0,3){\circle*{0.12}}
\put(0.5,3){\circle*{0.12}}
\put(1,3){\circle*{0.12}}
\put(1.5,3){\circle*{0.12}}
\put(0.6,2.7){\makebox(0,0)[t]{{\small $U$}}}

\put(9.2,3){\circle*{0.12}}
\put(9.5,2.7){\circle*{0.12}}
\put(9.8,2.4){\circle*{0.12}}
\put(10.1,2.1){\circle*{0.12}}
\put(10.4,1.8){\circle*{0.12}}
\put(10.7,1.5){\circle*{0.12}}
\put(9.6,1.8){\makebox(0,0)[t]{{\small $V$}}}

\end{picture}

\caption{The digraphs $A_1$ and $T = {\rm desc}(b)$.}
\label{exsigma}
\end{center}
\end{figure}
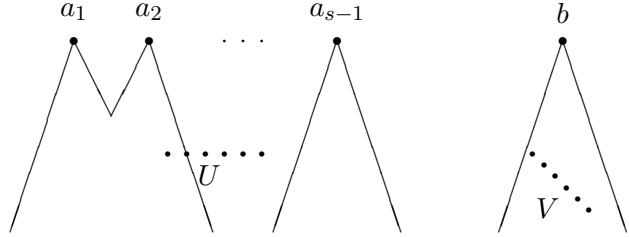
 
By the induction hypothesis, $A_1 \in {\cal D}$ since $A_1 \leq A \in {\cal C}_n$ and $A_1$ has $s-1$ generators. Let $P$ be the set of all 
$q$-element subsets of $U$. 

\begin{lemma}\label{4.5} There is $B \in {\cal D}$ containing $A_1$ such that every member of $P$ with at most $n-2$ common predecessors in $A_1$ has 
at least one common predecessor in $B$ which does not lie in $A_1$. 
\end{lemma}
\begin{proof}
Let $M$ be the greatest number of common predecessors in $A_1$ of an element of $P$. Note that $M \leq n-1$ since 
${\cal D} \subseteq {\cal C}_n$, and recall that ${\cal T}_{n-1} \in {\cal D}$. Now apply Lemma \ref{4.3} with $N = n-1$ to obtain $B \in 
{\cal D}$ containing $A_1$ and such that every $p \in P$ has at least $n-1$ common predecessors in $B$. So for $p \in P$ with at most $n-2$ common predecessors in $A_1$, there is at least one common predecessor of $ p$  in $B$  which does not lie in $A_1$. \end{proof}

\medskip
Let $T' \geq T$ be a copy of $T$ with root $z$ such that $(z,b)$ is  a directed edge; let $b' \neq b$ be  another successor of $z$. 

We now find  independent subsets $U' \cup I$ and $V' \cup J$ of $B$ and $T'$ respectively, with ${\rm desc}(U) \subseteq {\rm desc}(U')$ and ${\rm desc}(V') \subseteq {\rm desc}(V)$, such that $I$ is a complement to $U$ in $A_1$ and $J \subseteq {\rm desc}(b')$, together with an isomorphism from ${\rm desc}(U' \cup I)$ to ${\rm desc}(V' \cup J)$ which takes $I$ to $J$ and extends $f$.

Indeed, if $n = 2$ we let $U':=U$ and $V':=V$. Now suppose $n \geq 3$ and let $P'$ be the subset of $P$ consisting of all $q$-element sets $p 
\subseteq U$ with at least one and at most $n-2$ common predecessors in $A_1$, and such that the image $f(p)$ in $V$ has a common predecessor in $T'$. 
By Lemma \ref{4.5}, $p$ has a common predecessor $w_p$ in $B \setminus A_1$. Let $w_{f(p)}$ be the common predecessor of $f(p)$ in $T'$ and define 
\[
U':= \left( U \setminus \bigcup P' \right) \cup \{w_p\mid p \in P'\}
\]
and 
\[ 
V':=\left( V \setminus  \bigcup  f( P')\right) \cup \{w_{f(p)}\mid p \in P'\}
\]
By Lemma \ref{4.2}, $U'$ and $V'$ are independent subsets of $B$ and $T'$ respectively and the natural extension $F$ of $f$ which takes $w_p$ to 
$w_{f(p)}$ is an isomorphism from ${\rm desc}(U')$ to ${\rm desc}(V')$. In either case ($n = 2$ or $n \geq 3$), by Lemma \ref{4.1}, $U$ has a 
complement $I$ in $A_1$ and by Lemma \ref{4.2}, $U' \cup I$ is an independent set. Now let $J$ be an independent subset of ${\rm desc}(b')$ with 
$\left|J\right| = \left| I \right|$.  Since ${\rm desc}(b) \cap {\rm desc}(b')=\varnothing$, $V' \cup J$ is an independent subset of $T'$.  So $U' 
\cup I$ and $V' \cup J$ are independent subsets of the same size and there is an isomorphism $\overline F$ from ${\rm desc}(U' \cup I)$ to 
${\rm desc}(V'\cup J)$ extending $F$ and taking $I$ to $J$. 

\smallskip

By $\leq$-amalgamation, there are $D \in {\cal D}$ and $\leq$-embeddings $g_1 : B \rightarrow  D$, $g_2: T' \rightarrow D$ such that $g_1(y) = g_2 
({\overline F}(y))$ for all $y \in {\rm desc}(U'\cup I)$, where we may assume that $g_1$ is the identity map. As we now show, the point of the 
construction is that by extending {\em before} we amalgamate, we have ensured that in this amalgamation, unwanted identifications are avoided. 
 
\medskip
 
\begin{lemma}\label{4.6} $ A_1\cap g_2({\rm desc}(b))={\rm desc}(U)$.  \end{lemma}
\begin{proof} We have ${\rm desc}(U)=g_2({\rm desc}(V))$ since ${\overline F}_{\mid {\rm desc}(U)}=f$. As ${\rm desc}(V) \subseteq {\rm desc}(b)$, it follows that ${\rm desc}(U) \subseteq A_1 \cap g_2({\rm desc}(b))$. Now suppose for a contradiction that  there are vertices $\gamma \in A_1 \setminus {\rm desc}(U)$, $\gamma' \in {\rm desc}(b) \setminus {\rm desc}(V)$ such that $\gamma=g_2(\gamma')$. 
 
We first show that ${\rm desc}(\gamma) \setminus {\rm desc}(U)$ is finite. Indeed, suppose $a\in A_1$ is such that ${\rm desc}(a) \cap {\rm desc}(I) 
\neq \varnothing$. Then $a \neq g_2(\gamma'')$ for any $\gamma'' \in {\rm desc}(b) \setminus {\rm desc}(V)$ since ${\rm desc}(I) = g_2({\rm desc}(J)) 
\subseteq g_2({\rm desc}(b'))$ and ${\rm desc}(b') \cap {\rm desc}(b)=\varnothing$. So ${\rm desc}(\gamma) \cap {\rm desc}(I) = \varnothing$, and 
${\rm desc}(\gamma) \setminus {\rm desc}(U) = {\rm desc}(\gamma) \setminus {\rm desc}(U\cup I)$ is finite. 

Now we show that there is a $q$-element subset $p$ of $U \cap {\rm desc}({\gamma})$ with a common predecessor in 
${\rm desc}(\gamma)$. Choose $u \in U \cap {\rm desc}({\gamma})$ at maximal distance from $\gamma$, and let $y$ be the 
predecessor of $u$ in ${\rm desc}(\gamma)$ (note that $y \in  A_1$). Since ${\rm desc}(\gamma) \setminus {\rm desc}(U)$ is 
finite, ${\rm desc}(y) \setminus {\rm desc}(U)$ is finite. So if $u'$ is another successor of $y$, ${\rm desc}(u') \setminus 
{\rm desc}(U)$ is finite and our choice of $u$ implies that $u' \in U \cap {\rm desc}({\gamma})$. Thus we can take $p$ 
to be the set of successors of $y$. 
\smallskip

Now we finish off the proof of Lemma \ref{4.6}. Since $\gamma = g_2(\gamma')$, the $q$-element subset $f(p)$ of ${\rm desc}(\gamma') \cap V$ has a 
common predecessor, $y'$ say, in ${\rm desc}(\gamma')$ and $y=g_2(y')$. If $p$ has  $n-1$ predecessors in $A_1$,  then there is a copy of ${\cal T}_n$ 
in $A$ because $A$ is the free amalgam of $A_1$ and $T$ over ${\rm desc}(U)={\rm desc}(V)$. This is a contradiction. Therefore $p$ has at most $n-2$ 
common predecessors in $A_1$ and this means that $p \in P'$. It follows that $y' = w_{f(p)}$ since a $q$-element set of vertices of $T'$ has at most 
one common predecessor in $T'$ as $T'$ is a tree. Now as $w_p=g_2(w_{f(p)})$, we have $y=g_2(y')=g_2(w_{f(p)})=w_p$. This is a contradiction since 
$w_p \in B\setminus A_1$ and $y \in A_1$. \end{proof}

\medskip

We have therefore shown that, $A_1 \cup g_2({\rm desc}(b))$ as a subdigraph of $D$ is isomorphic to $A$.  So $A$ embeds in $D$ 
and therefore $A \in {\cal D}$. This completes the proof that ${\cal D} = {\cal C}_n$. \end{proof}

\bigskip

Finally suppose ${\cal D} \subseteq {\cal C}$ is a $\leq$-amalgamation class and  ${\cal T}_n \in {\cal D}$ for all 
$n \geq 1$. A similar argument as in the 
above proof can be used to show that any $A \in {\cal C}$ lies in $\cal D$. The two important points in that proof which we need to modify slightly, 
are the choice of the digraph $B$  and of the subset $P'$ of $P$. We want a digraph $B \in {\cal D}$ 
containing $A_1$ such that every $q$-element set of vertices of $U$ has at least $M+1$ common predecessors in $B$, where $M$ is the greatest number of 
common predecessors of $p$ in $A_1$ as $p$ ranges over $P$. For this we apply Lemma \ref{4.3} to $A_1$ with $N:=M+1$. In this case it will 
follow that for \textit{every} $p$ in $P$, there is at least one common predecessor of $p$ in $B$ which does not lie in $A_1$. We then take $P'$ to be the subset of $P$ consisting of all 
$q$-element sets $p$ which have at least one common predecessor in $A_1$ and such that $f(p)$ has a common predecessor in 
$T'$. The remainder of the 
argument follows similarly, except that when showing that $A_1 \cap g_2({\rm desc}(b)) ={\rm desc}(U)$, there is only one case to consider since for every 
$p$ in $P'$ there is a vertex $w_p \in B \setminus A_1$. We deduce the following.

\begin{proposition}
Let ${\cal D} \subseteq {\cal C}$ be a $\leq$-amalgamation class with ${\cal T}_n \in {\cal D}$ for all $n \geq 1$. Then ${\cal D} = {\cal C}$.
\end{proposition}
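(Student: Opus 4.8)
The plan is to mimic the proof of the previous proposition, running the same induction on the number of generators of $A \in {\cal C}$ to show $A \in {\cal D}$, but with the two modifications already flagged in the paragraph preceding the statement. So fix $A \in {\cal C}$ with minimal generating set $a_1, \ldots, a_s$; the base case $s = 1$ (and the case where $A$ is a disjoint union of copies of $T$) is immediate since $T \in {\cal D}$. For $s \geq 2$, set $A_1 := \bigcup_{i=1}^{s-1}{\rm desc}(a_i)$, so that $A_1 \in {\cal D}$ by induction, and realize $A$ as the free amalgam of $A_1$ and a copy $T = {\rm desc}(b)$ of the $q$-valent tree over ${\rm desc}(U) = {\rm desc}(V)$, where $U \subseteq A_1$, $V \subseteq T$ are finite independent sets and $f : {\rm desc}(U) \to {\rm desc}(V)$ is the identifying isomorphism. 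It then suffices to embed this free amalgam $C$ in some $D \in {\cal D}$.

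First I would choose the intermediate digraph $B$: let $P$ be the set of all $q$-element subsets of $U$, let $M$ be the greatest number of common predecessors in $A_1$ of a member of $P$, and apply Lemma \ref{4.3} with $N := M+1$ (legitimate here precisely because ${\cal T}_{M+1} \in {\cal D}$ by hypothesis) to get $B \in {\cal D}$ with $A_1 \leq B$ in which every $p \in P$ has at least $M+1$ common predecessors; hence every $p \in P$ has a common predecessor $w_p \in B \setminus A_1$. Next take $T' \geq T$ a copy of $T$ with root $z$, a directed edge $(z,b)$, and another successor $b' \neq b$ of $z$. Let $P'$ be the set of $p \in P$ such that $f(p)$ has a common predecessor $w_{f(p)}$ in $T'$ — note there is no longer a ``$\leq n-2$'' restriction, since every $p$ now acquires a new predecessor in $B$. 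Define $U'$ and $V'$ by replacing each $p \in P'$ by $w_p$ and each $f(p)$ by $w_{f(p)}$ as before; by Lemma \ref{4.2}, $U', V'$ are independent, ${\rm desc}(U) \subseteq {\rm desc}(U')$, ${\rm desc}(V') \subseteq {\rm desc}(V)$, and $f$ extends to an isomorphism $F : {\rm desc}(U') \to {\rm desc}(V')$. By Lemma \ref{4.1} pick a complement $I$ of $U$ in $A_1$, so $U' \cup I$ is independent by Lemma \ref{4.2}(b), and pick an independent $J \subseteq {\rm desc}(b')$ with $|J| = |I|$; then $V' \cup J$ is independent (as ${\rm desc}(b) \cap {\rm desc}(b') = \varnothing$) and $F$ extends to $\overline F : {\rm desc}(U' \cup I) \to {\rm desc}(V' \cup J)$ sending $I$ to $J$. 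Now $\leq$-amalgamate $B$ and $T'$ over $\overline F$ to get $D \in {\cal D}$ with embeddings $g_1 = {\rm id} : B \to D$ and $g_2 : T' \to D$ agreeing via $\overline F$.

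The crux, exactly as in the previous proposition, is to verify $A_1 \cap g_2({\rm desc}(b)) = {\rm desc}(U)$, which then identifies $A_1 \cup g_2({\rm desc}(b)) \subseteq D$ with the free amalgam $A$ and finishes the induction. The $\supseteq$-free inclusion ${\rm desc}(U) \subseteq A_1 \cap g_2({\rm desc}(b))$ is immediate from $\overline F \restriction {\rm desc}(U) = f$ and ${\rm desc}(V) \subseteq {\rm desc}(b)$. For the reverse, suppose there were $\gamma \in A_1 \setminus {\rm desc}(U)$ with $\gamma = g_2(\gamma')$, $\gamma' \in {\rm desc}(b) \setminus {\rm desc}(V)$. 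The argument of Lemma \ref{4.6} goes through verbatim: since ${\rm desc}(I) = g_2({\rm desc}(J)) \subseteq g_2({\rm desc}(b'))$ is disjoint from $g_2({\rm desc}(b))$, we get ${\rm desc}(\gamma) \cap {\rm desc}(I) = \varnothing$, so ${\rm desc}(\gamma) \setminus {\rm desc}(U)$ is finite; choosing $u \in U \cap {\rm desc}(\gamma)$ at maximal distance from $\gamma$ and $y$ its predecessor in ${\rm desc}(\gamma)$ yields a $q$-element $p \in P$ (the successors of $y$) with a common predecessor $y \in A_1 \cap {\rm desc}(\gamma)$, and correspondingly $f(p)$ has common predecessor $y' \in {\rm desc}(\gamma')$ with $y = g_2(y')$. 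Here is the one place requiring care relative to Lemma \ref{4.6}: we must argue $p \in P'$ and $y' = w_{f(p)}$. Since $y'$ is a common predecessor of $f(p)$ in $T'$ (being in ${\rm desc}(\gamma') \subseteq {\rm desc}(b) \subseteq T'$) and $T'$ is a tree, $f(p)$ has a unique common predecessor in $T'$, so $p \in P'$ and $y' = w_{f(p)}$; hence $y = g_2(w_{f(p)}) = w_p \in B \setminus A_1$, contradicting $y \in A_1$. Note the contradiction is now reached without ever invoking a bound like ``$p$ has $n-1$ predecessors in $A_1$'' — that case simply does not arise, which is precisely why the argument is (slightly) shorter here. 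The main obstacle, such as it is, is bookkeeping: making sure every $p \in P$ really does get a fresh predecessor in $B$ so that the case analysis in the analogue of Lemma \ref{4.6} collapses to the single case above; once that is in place the proof is a routine transcription.
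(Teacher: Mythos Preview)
Your proposal is correct and follows essentially the same approach as the paper: the induction on the number of generators, the choice of $B$ via Lemma~\ref{4.3} with $N=M+1$, the passage to $U',V'$ via Lemma~\ref{4.2}, the complement $I$ and the auxiliary $J\subseteq\desc(b')$, and the single-case version of Lemma~\ref{4.6} are exactly what the paper does. The only cosmetic difference is your definition of $P'$: you take all $p\in P$ with $f(p)$ having a common predecessor in $T'$, whereas the paper additionally asks that $p$ have at least one common predecessor in $A_1$; since the $p$ produced in the contradiction step has $y\in A_1$ as a common predecessor anyway, this makes no difference to the argument.
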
\smallskip

We have therefore shown that 

\begin{theorem}
Any $\leq$-amalgamation class ${\cal D} \subseteq {\cal C}$ is equal to  $\cal C$ or to ${\cal C}_n$ for some $n \geq 2$. 
\end{theorem}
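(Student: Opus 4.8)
The plan is to combine the two Propositions just proved, together with the basic structural facts established earlier in this section, into a single clean statement. I would argue as follows. Let $\mathcal D \subseteq \mathcal C$ be a $\leq$-amalgamation class. There are exactly two possibilities: either $\mathcal T_n \in \mathcal D$ for every $n \geq 1$, or there is some $m \geq 2$ with $\mathcal T_m \notin \mathcal D$. (Note $\mathcal T_1 = T \in \mathcal D$ automatically, since $\mathcal D$ is nonempty, closed under finitely generated $\leq$-subdigraphs, and every $A \in \mathcal C$ contains a copy of $T$ as the descendant set of a vertex.) In the first case, the second Proposition gives $\mathcal D = \mathcal C$. In the second case, let $n$ be the least $m \geq 2$ with $\mathcal T_m \notin \mathcal D$; then the first Proposition applies and yields $\mathcal D = \mathcal C_n$. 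Since $n \geq 2$, this is exactly the desired dichotomy, and the theorem follows.

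In writing this up I would first record the observation that $T \in \mathcal D$ for any $\leq$-amalgamation class $\mathcal D \subseteq \mathcal C$: this is what makes the base cases of the inductions in both Propositions go through, and it also justifies the phrasing ``$\mathcal T_n \in \mathcal D$ for all $n \geq 1$'' in the second Proposition as genuinely covering the complementary case to the first. Then the proof is essentially a one-line case split invoking the two Propositions. I would be careful to state explicitly that the two cases are exhaustive and mutually exclusive, so that the two values ``$\mathcal C$'' and ``$\mathcal C_n$, $n \geq 2$'' in the conclusion really are the only options.

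There is no serious obstacle here: all the work has been done in Lemmas \ref{4.1}--\ref{4.6} and the two Propositions, and this final Theorem is purely a matter of packaging. The only point requiring the tiniest bit of care is ensuring the integer $n$ produced in the second case is $\geq 2$ rather than $1$ — but this is immediate from $\mathcal T_1 = T \in \mathcal D$, which forces the least bad index to be at least $2$. Combined with Theorem \ref{2.1} and the discussion at the start of Section 3 (identifying $\mathcal C$ with $D_\infty$ and $\mathcal C_n$ with $D_n$), this Theorem is what delivers the main classification, Theorem \ref{maintheorem}.

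\begin{proof}
First note that for any $\leq$-amalgamation class $\mathcal D \subseteq \mathcal C$ we have $T \in \mathcal D$: the class $\mathcal D$ is nonempty and closed under taking finitely generated $\leq$-subdigraphs, and any $A \in \mathcal D$ contains $\mathrm{desc}(a) \cong T$ as a finitely generated $\leq$-subdigraph for any vertex $a \in A$. In particular $\mathcal T_1 = T \in \mathcal D$.

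Now there are two mutually exclusive and exhaustive cases. If $\mathcal T_n \in \mathcal D$ for every $n \geq 1$, then by the second Proposition above, $\mathcal D = \mathcal C$. Otherwise, there is some $m \geq 2$ with $\mathcal T_m \notin \mathcal D$ (such an $m$ cannot equal $1$, as $\mathcal T_1 \in \mathcal D$). Let $n$ be the least such $m$; then $n \geq 2$ and by the first Proposition above, $\mathcal D = \mathcal C_n$. This proves the theorem.
\end{proof}
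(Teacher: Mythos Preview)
Your proposal is correct and matches the paper's approach exactly: the paper states the theorem with the phrase ``We have therefore shown that'' immediately after the two Propositions, treating it as their direct combination via the same case split on whether $\mathcal T_n \in \mathcal D$ for all $n$. Your explicit justification that $T = \mathcal T_1 \in \mathcal D$ (from nonemptiness and closure under finitely generated $\leq$-subdigraphs) is a nice bit of extra care that the paper leaves implicit, but the argument is otherwise identical.
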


This means that if $D$ is a countable descendant-homogeneous digraph whose descendant set is isomorphic to $T$, then $D\cong D_n$ for some $n \in \{2, \ldots, \infty\}$.

\section{A general construction}\label{general}

\subsection{Descendant sets}

In this subsection we prove the following.

\begin{theorem}\label{desccond}
Suppose $\Gamma$ is a countable digraph. Then there is a countable, vertex transitive, descendant-homogeneous digraph $M$ in which all descendant sets are isomorphic to $\Gamma$ if and only if the following conditions hold:
\begin{enumerate}
\item [{\rm (C1)}]${\rm desc}(u)\cong \Gamma $ for all $u\in \Gamma$;

\item [{\rm (C2)}] If $X$ is a finitely generated subdigraph of $\Gamma$ then the subgroup of automorphisms of $X$ which extend to automorphisms of $\Gamma$ is of countable index in $\Aut(\Gamma)$ .
\end{enumerate}
\end{theorem}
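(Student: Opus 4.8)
\textbf{Proof proposal for Theorem \ref{desccond}.} The plan is to establish the two directions separately, using the Fra\"{\i}ss\'e-type machinery of Theorem \ref{2.1} (in the form (4$'$) of Remark \ref{rem2}) for the construction and a direct argument for necessity. For the forward (``only if'') direction, suppose such an $M$ exists. Condition (C1) is immediate: fixing $u_0 \in M$, the descendant set ${\rm desc}_M(u_0)$ is isomorphic to $\Gamma$, and for any $u$ in it we have ${\rm desc}_\Gamma(u) = {\rm desc}_M(u)$ (since descendant sets are downward closed), which is again isomorphic to $\Gamma$ by vertex-transitivity of $M$. For (C2): let $X$ be a finitely generated subdigraph of $\Gamma \cong {\rm desc}_M(u_0)$. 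Then $X$ is a finitely generated $\leq$-subdigraph of $M$, and an automorphism of $X$ extends to an automorphism of $\Gamma$ exactly when it extends to an automorphism of $M$ fixing $u_0$ — because an automorphism of $\Gamma = {\rm desc}_M(u_0)$ fixing $X$ setwise and an automorphism of $M$ are interchangeable here once one checks that an automorphism of $M$ fixing $u_0$ restricts to one of ${\rm desc}_M(u_0)$ and conversely any automorphism of ${\rm desc}_M(u_0)$ extends (by descendant-homogeneity of $M$, since ${\rm desc}(u_0)$ is a finitely generated $\leq$-subdigraph). So the subgroup in question is the image of the stabilizer in $\Aut(M)$ of $u_0$ acting on $X$, and its index is bounded by the number of images of $X$ under $\Aut(M)_{u_0}$, hence by the number of finitely generated $\leq$-subdigraphs of $M$ isomorphic to $X$, which is countable since $M$ is countable. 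This is essentially the argument already given in the excerpt for why (4) implies (4$'$), adapted to the present setting.

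For the converse, assume (C1) and (C2). The strategy is to take $\C$ to be the class of all digraphs $A$ that embed as finitely generated $\leq$-subdigraphs of (a disjoint union of copies of) $\Gamma$ — more precisely, $A \in \C$ iff $A$ is finitely generated, every ${\rm desc}_A(a)$ is isomorphic to $\Gamma$ (by (C1) this is the natural local condition), and $A$ is a free amalgam over suitable subconfigurations of finitely many copies of $\Gamma$. Actually the cleanest choice: let $\C$ be the class of finite disjoint-union-like digraphs that arise as follows — take finitely many copies of $\Gamma$ and freely amalgamate them over finitely generated $\leq$-subdigraphs. I would then verify conditions (1)–(4) of Theorem \ref{2.1} for $(\C, \leq)$. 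Closure under isomorphism and finitely-generated-$\leq$-subdigraphs, and countably many isomorphism types (using that $\Gamma$ is countable, so has only countably many finitely generated $\leq$-subdigraphs up to isomorphism, and finite free amalgams of these are again countably many) should be routine; condition (4$'$) is where (C2) enters — for $A \leq B \in \C$ with $A$ finitely generated, the automorphisms of $A$ extending to $B$ contain those that ``extend locally'' to each copy of $\Gamma$ that $B$ is built from, and (C2) guarantees countable index at each stage. The $\leq$-amalgamation property (3): take the \emph{free} amalgam of $B_1$ and $B_2$ over $A$ as in Remark \ref{freedef}; one checks it again lies in $\C$, since it is still a finite free amalgam of copies of $\Gamma$. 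The Fra\"{\i}ss\'e limit $M$ then has all descendant sets isomorphic to $\Gamma$ — for each vertex $v$, ${\rm desc}_M(v)$ is a countable increasing union of finitely generated $\leq$-subdigraphs each of which $\leq$-embeds in $\Gamma$, and a back-and-forth/genericity argument identifies this union with $\Gamma$ — and $M$ is vertex-transitive because any two singletons $\{v\}, \{w\}$ have isomorphic descendant closures $\cong \Gamma$, so the isomorphism extends to an automorphism of $M$ by descendant-homogeneity.

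The main obstacle I expect is getting the definition of $\C$ exactly right so that it is simultaneously (a) closed under free amalgamation, (b) closed under finitely generated $\leq$-subdigraphs, and (c) gives $\Gamma$ (and not something larger) as the descendant set in the limit. The subtlety is that ``${\rm desc}_A(a) \cong \Gamma$ for all $a$'' together with ``$A$ finitely generated'' may not by itself be closed under taking $\leq$-subdigraphs or under free amalgamation in a way that is obviously contained in the class, and one must be careful that the amalgamation does not create a descendant set strictly containing $\Gamma$; this is handled by insisting that every $A \in \C$ literally be (isomorphic to) a free amalgam of finitely many copies of $\Gamma$ over finitely generated $\leq$-subdigraphs, and then checking that free amalgams of such things, and finitely generated $\leq$-subdigraphs of such things, are again of this form. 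The verification of (4$'$) via (C2) is the other delicate point: one needs that (C2), a statement about $\Gamma$, propagates to the finite free amalgams in $\C$, which follows by an induction on the number of copies of $\Gamma$ in the amalgam, using at each step that the stabilizer of a finitely generated subdigraph inside one copy of $\Gamma$ has countable index in that copy's automorphism group, together with the fact that a product/extension of countably-many-cosets situations still has countable index.
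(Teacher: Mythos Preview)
Your forward direction is essentially the paper's: both derive (C1) from vertex transitivity and (C2) as an instance of condition (4$'$) in Remark~\ref{rem2}.

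For the converse, your strategy --- build a $\leq$-amalgamation class from copies of $\Gamma$ and invoke Theorem~\ref{2.1} --- is the right one, but the definition of $\C$ you propose does not quite work, and the gap is not merely one of phrasing. Your two candidate definitions each fail one of the closure conditions. The class of finitely generated $\leq$-subdigraphs of disjoint unions of copies of $\Gamma$ is not closed under free amalgamation. The class of iterated free amalgams of copies of $\Gamma$ over finitely generated $\leq$-subdigraphs is not closed under taking finitely generated $\leq$-subdigraphs: if $a,b\in\Gamma$ then ${\rm desc}(a)\cup{\rm desc}(b)$ is a finitely generated $\leq$-subdigraph of $\Gamma$, but it is a free amalgam of two copies of $\Gamma$ \emph{over a finitely generated base} only if ${\rm desc}(a)\cap{\rm desc}(b)$ is finitely generated --- and nothing in (C1), (C2) as you use them forces this.

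The missing ingredient is exactly that intersection condition. The paper defines $\C_\Gamma$ by three intrinsic requirements: ${\rm desc}(a)\cong\Gamma$ for all $a\in A$; $A$ finitely generated; and
\[
\text{(D3)}\qquad {\rm desc}(a)\cap{\rm desc}(b)\ \text{is finitely generated for all }a,b\in A.
\]
With (D3) in hand, closure under finitely generated $\leq$-subdigraphs and under free amalgamation are routine, and the countability condition (4) is proved by an induction on ``$n$-extensions'' (Lemma~\ref{countable}): a $1$-extension $A\leq B$ is the free amalgam of $A$ and one copy of $\Gamma$ over $C=A\cap{\rm desc}(b)$, and (D3) is precisely what makes $C$ finitely generated so that (C2) applies to it. Your proposed induction ``on the number of copies of $\Gamma$'' is morally this same induction, but without (D3) the base case does not go through. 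Finally, the point you flagged about identifying ${\rm desc}_M(v)$ with $\Gamma$ in the limit needs no back-and-forth: once every $A\in\C_\Gamma$ satisfies ${\rm desc}_A(a)\cong\Gamma$ and the chain $C_1\leq C_2\leq\cdots$ consists of $\leq$-embeddings, one has ${\rm desc}_M(v)={\rm desc}_{C_i}(v)\cong\Gamma$ for any $C_i$ containing $v$.
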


For one direction of this, suppose $M$ is  a vertex transitive, descendant-homoge\-neous digraph. The descendant sets of vertices in $M$ are all isomorphic to a fixed digraph $\Gamma$, so (C1)
 holds. Condition (C2) is a special case of (4$'$) in Remark \ref{rem2}, so follows from Theorem \ref{2.1} and Remark \ref{rem2}.
 
 We now prove the converse. So for the rest of this subsection, suppose that $\Gamma$ is a countable digraph which satisfies conditions (C1) and (C2). Let ${\cal C}_{\Gamma}$ be the class of digraphs $A$ satisfying the following conditions:

\begin{enumerate}
\item[(D1)]  ${\rm desc}(a)$ is isomorphic to $\Gamma$, for all $a \in A$; 
\item[(D2)] $A$ is finitely generated;
\item[(D3)] for $a,b \in A$, the intersection ${\rm desc}(a)\cap {\rm desc}(b)$ is finitely generated.
\end{enumerate}

Then ${\cal C}_{\Gamma}$ is closed under isomorphism and taking finitely generated descendant-closed substructures. Moreover, it is easy to see that if $A \leq B_1, B_2 \in \C_\Gamma$ and $A$ is finitely generated, then the free amalgam of $B_1$ and $B_2$ over $A$ is in $\C_\Gamma$. Thus, Theorem \ref{desccond} will follow once we verify that the countability conditions in (1) and (4) of Theorem \ref{2.1} hold for $\C_\Gamma$. The following lemma will suffice.

\begin{lemma} \label{countable} Suppose $A \in \C_\Gamma$. Then there are only countably many isomorphism types of $\leq$-embeddings $f : A \to B$ with $B \in \C_\Gamma$. 
\end{lemma}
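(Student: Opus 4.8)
The plan is to reduce the count to what happens on a fixed finitely generated image and then bound automorphisms using condition (C2). First I would observe that, since $B$ is countable and $A$ is finitely generated, there are only countably many possibilities for the image $f(A)$ as a subset of $B$; hence (exactly as in Remark \ref{rem2}) it suffices to fix a finitely generated descendant-closed $Y$ and count isomorphism types of $\leq$-embeddings $f : A \to B$ with $B \in \C_\Gamma$ and $f(A) = Y \leq B$. For such embeddings, two of them $f, f'$ (with the same image $Y$ and possibly different ambient $B, B'$) are isomorphic precisely when there is an isomorphism $B \to B'$ carrying $f$ to $f'$; the key point is that this forces (and is forced by) the map $g = f' \circ f^{-1} \in \Aut(Y)$ to lie in the subgroup $H(Y,B) \leq \Aut(Y)$ of automorphisms of $Y$ that extend to automorphisms of $B$ — more precisely, one shows $f$ and $f'$ are isomorphic iff $g$ extends to an isomorphism $B \to B'$.

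Next I would dispose of the ambient digraph. Since $Y \cong A$ is finitely generated and lies in $\C_\Gamma$, condition (C2) applies: the subgroup $H_0 \leq \Aut(Y)$ of automorphisms of $Y$ that extend to automorphisms of $\Gamma$ — more precisely, of the descendant set $\mathrm{desc}(y)$ of a suitable generator, and via descendant-closure of any ambient $\C_\Gamma$-structure — has countable index in $\Aut(Y)$. The crucial claim is then that \emph{every} automorphism of $Y$ lying in $H_0$ already extends to an automorphism of every $B \in \C_\Gamma$ containing $Y$; equivalently, $H_0 \subseteq H(Y,B)$ for all such $B$. This is because an automorphism of $Y$ which is a ``union of automorphisms of the descendant sets of the generators'' (in the sense forced by (C2), since $Y$ is finitely generated and each $\mathrm{desc}(a) \cong \Gamma$) can be extended by the identity, or rather by the obvious amalgamated map, on the rest of $B$: the part of $B$ outside $Y$ is attached to $Y$ only through descendant sets of vertices of $Y$, which are themselves unions of copies of $\Gamma$, and the chosen automorphism acts on these compatibly. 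Granting this, $H(Y,B) \supseteq H_0$ has countable index in $\Aut(Y)$, so the number of relevant cosets — hence the number of isomorphism types of $\leq$-embeddings with image $Y$ — is countable. Summing over the countably many choices of $Y$ gives the result.

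The main obstacle is the claim in the previous paragraph, namely that membership in $H_0$ (extendability to $\Gamma$, i.e. to the descendant sets living inside $Y$) is enough to guarantee extendability to an arbitrary ambient $B \in \C_\Gamma$, not just to $\Gamma$ itself. The difficulty is that $B$ may attach new material to $Y$ along the descendant sets of \emph{several} vertices of $Y$ simultaneously, and along their pairwise intersections $\mathrm{desc}(a) \cap \mathrm{desc}(b)$ (which are finitely generated by (D3)), so one must check that the chosen automorphism of $Y$ respects all of this attaching data coherently. I would handle this by noting that an automorphism $\sigma$ of $Y$ in $H_0$ is determined by, and built from, its restrictions to the descendant sets $\mathrm{desc}_Y(a_i)$ of the generators $a_i$ of $Y$; each such restriction is an automorphism of a copy of $\Gamma$, and these automorphisms agree on overlaps. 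One can then extend $\sigma$ to $B$ by: fixing $B \setminus \mathrm{desc}_B(Y)$ pointwise (there are no edges from there into $\mathrm{desc}_B(Y)$ that are not already within $Y$, by descendant-closedness — here I am using the structure of free amalgams from Remark \ref{freedef}), and on $\mathrm{desc}_B(a_i) \supseteq \mathrm{desc}_Y(a_i)$ — which is again a copy of $\Gamma$ in $\C_\Gamma$ — using that $\sigma|_{\mathrm{desc}_Y(a_i)}$ extends to $\mathrm{desc}_B(a_i) \cong \Gamma$ by (C2) applied inside $B$, choosing these extensions compatibly on the finitely generated overlaps by a finite amalgamation. Verifying this compatibility is the real content; the rest is bookkeeping with the definitions of $\leq$-embedding and isomorphism of embeddings.
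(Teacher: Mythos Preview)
Your argument has a structural gap: you never actually count the ambient digraphs $B$. The reduction via Remark~\ref{rem2} (fix the image $Y$, then count cosets of the extendable automorphisms) is an argument for \emph{fixed} $B$; here $B$ ranges over all of $\C_\Gamma$. Up to isomorphism there is only one possible image $Y$, namely $A$ itself, so ``summing over countably many choices of $Y$'' buys nothing. What you must bound is the number of isomorphism types of extensions $A \leq B$ with $B \in \C_\Gamma$, and your argument simply does not touch this. Even if your ``crucial claim'' $H_0 \subseteq H(Y,B)$ held for every $B$, it would only say that for each individual $B$ there are countably many embeddings with image $Y$; it says nothing about how many non-isomorphic $B$ there are.

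There are also problems with the claim itself. First, (C2) is stated for finitely generated $X \leq \Gamma$, but your $Y \cong A$ is an arbitrary member of $\C_\Gamma$ and need not embed in $\Gamma$ at all (e.g.\ $\mathcal{T}_2$ does not embed in the tree $T_q$), so you cannot invoke (C2) for $Y$ directly. Second, your proposed extension of $\sigma \in \Aut(Y)$ to $B$ is vacuous: since $Y$ is descendant-closed in $B$, we have $\desc_B(a_i) = \desc_Y(a_i)$ for each generator $a_i$ of $Y$, and $\desc_B(Y) = Y$; there is nothing outside $Y$ to which (C2) gives you an extension. Concretely, take $Y = \desc(a) \cong \Gamma$ and $B$ the free amalgam of $Y$ with a second copy $\desc(b)$ of $\Gamma$ over $\desc(c)$ for some fixed $c \in Y$: then $H_0 = \Aut(Y)$, but any automorphism of $B$ must fix $\desc(c) = \desc(a)\cap\desc(b)$ setwise, so most of $\Aut(Y)$ does not extend.

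The paper's proof proceeds quite differently: it inducts on the number of generators of $B$ beyond $f(A)$. A $1$-extension is a free amalgam $A \ast_g \Gamma$ for some finitely generated $D \leq A$ and $\leq$-embedding $g : D \to \Gamma$; one then applies (C2) to the image $g(D) \leq \Gamma$ (not to $A$) to see that the isomorphism type of the amalgam depends only on a coset of countable index. Chaining $1$-extensions handles general $n$-extensions. The essential point your approach misses is that (C2) is used on the \emph{gluing locus inside $\Gamma$}, not on $A$.
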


Once we have this, taking $A = \emptyset$ (or $A = \Gamma$) gives that $\C_{\Gamma}$ contains only countably many isomorphism types; for fixed $A, B \in \C_\Gamma$, the lemma gives condition (4) of Theorem \ref{2.1}.

\medskip

\begin{proof}
We say that a $\leq$-embedding $f : A \to B$ with $A, B \in \C_\Gamma$ is an \textit{$n$-extension} if $B$ can be generated by $f(A)$ and at most $n$ extra elements. We prove by induction on $n$ that for every $A \in \C_\Gamma$ there are only countably many isomorphism types of $n$-extensions of $A$. 

Suppose $f : A \to B$ is a $1$-extension (with $A, B \in \C_\Gamma$). Let $b \in B$ be such that $B$ is generated by $f(A)$ and $b$ and let $C = f(A) \cap {\rm desc}(b)$. It follows from property  (D3) in $B$ and finite generation of $A$, that $C$ is finitely generated. Moreover, as each of $f(A)$ and ${\rm desc}(b)$ is descendant-closed in $B$, we have that $B$ is the free amalgam of $f(A)$ and ${\rm desc}(b)$ over $C$. Choose an isomorphism from $\desc(b)$ to $\Gamma$ and let $h$ be the restriction of this to $C$ and $g : f^{-1}(C) \to \Gamma$ be given by $g = h \circ f$. Then, in the notation of Remark \ref{freedef}, we have an isomorphism from $B$ to $A \ast_{g} \Gamma$ and therefore $f$ is isomorphic to a $1$-extension $A \to A \ast_g \Gamma$ for some finitely generated $D \leq A$ and $\leq$-embedding $g : D \to \Gamma$. 

There are countably many possibilities for $D$ and the image $g(D)$ here (as $D$ is finitely generated), so it will suffice to show that there are only countably many isomorphism types of $A \ast_{g} \Gamma$ with $g : D \to \Gamma$ having fixed domain $D$ and image $E \leq \Gamma$. If $g_1, g_2 : D \to \Gamma$ have image $E$ then $g_1 \circ g_2^{-1}$ gives an automorphism of $E$. This extends to an automorphism  of $\Gamma$ if and only if there is an isomorphism between the extensions $g_i : A \to A\ast_{g_i} \Gamma$. Thus, the isomorphism types here are in one-to-one correspondence with the cosets in $\Aut(E)$ of the subgroup of automorphisms which extend to automorphisms of $\Gamma$. So there are only countably many isomorphism types, by (C2).

This proves that there are countably many isomorphism types of $1$-extensions of $A$. For the inductive step, we can take countably many representatives $f_j : A \to B_j'$ (for $j \in \N$) of the isomorphism types of $(n-1)$-extensions of $A$, and representatives $h_{jk} : B_j' \to B_{jk}'$ of the $1$-extensions of $B_j'$ (for $j, k \in \N$). We claim that any $n$-extension $f : A \to B$ is isomorphic to some $h_{jk} \circ f_j : A \to B_{jk}'$. Indeed, let $f(A)\leq B_1 \leq B$ be such that $B_1$ is generated by $f(A)$ and $n-1$ elements, and $B$ is generated by $B_1$ and one extra element. So we can write $f = i\circ g$ where $g : A \to B_1$ is an $(n-1)$-extension and $i : B_1 \to B$ is a $1$-extension. There is $j \in \N$ and an isomorphism $h : B_j' \to B_1$ with $h \circ f_j = g$. We can then find $k \in \N$ and an isomorphism $p : B_{jk}' \to B$ with $i\circ h = p \circ h_{jk}$. Then $p\circ h_{jk}\circ f_j = g \circ i = f$, as required. 
\end{proof}

\medskip

It then follows by  Theorem \ref{2.1} that  the Fra\"{\i}ss\'e limit $D_{\Gamma}$ of $(\C_\Gamma, \leq)$ is  a countable descendant-homogeneous digraph with $\C_\Gamma$  as its class of finitely generated $\leq$-subdigraphs. Vertex transitivity follows from (C1). 

\subsection{Examples and further remarks}

In this subsection we show that a class of digraphs $\Gamma$ arising in \cite{amato1} in the context of highly arc transitive digraphs satisfy the conditions in Theorem \ref{desccond} and therefore arise as the descendant sets in descendant-homogeneous digraphs. We begin by reviewing some of the results of \cite{amato1} and related papers.

The paper \cite{amato1} studies highly arc transitive digraphs of finite out-valency and gives conditions which the descendant set $\Gamma$ of a vertex in such a digraph must satisfy. In particular:

\begin{theorem} Suppose $\Gamma$ is the descendant set of a vertex $\alpha$ in an infinite  highly arc transitive digraph $D$ of finite out-valency. Then the following properties hold:
\begin{enumerate}
\item[(T1)]  $\Gamma = {\rm desc}(\alpha)$ is a rooted digraph with finite out-valency and ${\rm desc}^s(\alpha) \cap {\rm desc}^t(\alpha) = \varnothing$ whenever $s\neq t$.
\item[(T2)] ${\rm desc}(u)\cong \Gamma $ for all $u\in \Gamma$.
\item[(T3)] $\Aut(\Gamma)$ is transitive on $\desc^s(\alpha)$, for all $s$.
\item[(T4)] There is a natural number $N=N_{\Gamma}$ such that for $l >N$ and $x, a \in \Gamma$, if $b \in {\rm desc}^l(x)\cap \desc^1(a)$ , then $a \in {\rm desc}(x)$.
\end{enumerate}
\end{theorem}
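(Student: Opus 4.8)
These four assertions decompose as follows: in (T1) the finite out-valency is a hypothesis and rootedness at $\alpha$ is immediate from the definition of $\desc$, so the content is the disjointness of the levels $\desc^s(\alpha)$; (T2) and (T3) are short consequences of transitivity; and (T4) is the substantial one. For (T1), note first that high arc transitivity makes $\Aut(D)$ vertex-transitive and, for each $s$ and each vertex $\alpha$, makes $\Aut(D)_\alpha$ transitive on $\desc^s(\alpha)$ -- given $\beta,\beta'\in\desc^s(\alpha)$, map an $s$-arc from $\alpha$ to $\beta$ onto an $s$-arc from $\alpha$ to $\beta'$. Hence if $\desc^s(\alpha)\cap\desc^t(\alpha)\ne\varnothing$, then this set, being a non-empty $\Aut(D)_\alpha$-invariant subset of each of the transitively acted-on sets $\desc^s(\alpha)$ and $\desc^t(\alpha)$, equals both, so $\desc^s(\alpha)=\desc^t(\alpha)$. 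Pushing such an equality forward one level at a time shows $(\desc^j(\alpha))_j$ is eventually periodic (a directed $2$-cycle, were it to obstruct this, is itself a directed cycle), so $\desc(\alpha)=\bigcup_{j<t}\desc^j(\alpha)$ is finite since $|\desc^j(\alpha)|\le q^j$. But a finite descendant-closed set in which every vertex has a successor contains a directed cycle, and a connected highly arc transitive digraph of finite out-valency containing a directed cycle is finite (a short argument from the transitivity above, or see \cite{cameron}), contradicting that $D$ is infinite. (One may assume $D$ connected by passing to the component of $\alpha$, which contains $\Gamma$.) Thus the levels are disjoint; in particular $\Gamma$ is acyclic, every edge of $\Gamma$ raises depth by exactly one, and each vertex of $\Gamma$ has a well-defined depth.

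For (T2): since $\Gamma=\desc_D(\alpha)$ is descendant-closed in $D$ we have $\desc_\Gamma(u)=\desc_D(u)$ for $u\in\Gamma$, and any $g\in\Aut(D)$ with $g(\alpha)=u$ restricts to an isomorphism $\Gamma\to\desc_D(u)$. For (T3): given $\beta,\beta'\in\desc^s(\alpha)$, choose $g\in\Aut(D)$ carrying an $s$-arc from $\alpha$ to $\beta$ onto one from $\alpha$ to $\beta'$; then $g$ fixes $\alpha$, hence stabilises $\Gamma=\desc_D(\alpha)$, and $g|_\Gamma\in\Aut(\Gamma)$ sends $\beta$ to $\beta'$.

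For (T4) the plan is to reduce to a boundedness statement about internally disjoint directed paths. Suppose $a,b,x\in\Gamma$ with $a\to b$, $b\in\desc^l(x)$ and $a\notin\desc(x)$; by (T1), any two directed paths in $\Gamma$ with the same endpoints have the same length, namely the difference of depths. Let $c$ be a common ancestor of $x$ and $a$ in $\Gamma$ of maximal depth (one exists, since $\alpha$ is such an ancestor and the depths of ancestors of $x$ are bounded by $\mathrm{depth}(x)$). Then $c\ne x$, as otherwise $a\in\desc(x)$, so $d:=\mathrm{depth}(x)-\mathrm{depth}(c)\ge 1$, and the path $c\leadsto x\leadsto b$ and the path $c\leadsto a\to b$ each have length $d+l>l$. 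These two paths meet only at $c$ and $b$: a common intermediate vertex of depth at most $\mathrm{depth}(x)$ would be a common ancestor of $x$ and $a$ properly below $c$, against maximality, while one of greater depth would be a descendant of $x$ that is an ancestor of $a$, forcing $a\in\desc(x)$. So $\Gamma$ contains two internally disjoint directed paths with common endpoints, of length $d+l$. Hence (T4) holds with $N=N_0$ once one establishes the lemma: there is $N_0=N_0(D)$ such that any two internally disjoint directed paths with a common pair of endpoints have length at most $N_0$.

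Establishing this lemma is the main obstacle. My first attempt would be a compactness argument exploiting finite out-valency: the directed paths out of a fixed vertex form a finitely branching tree, so if the lemma failed one could -- after applying automorphisms to bring all the initial vertices to $\alpha$ -- use K\"onig's lemma and compactness of $\Aut(D)$ in the pointwise topology to extract a subsequence in which the two path-families converge to internally disjoint infinite rays from $\alpha$ and the automorphisms interchanging them converge to a limit automorphism. The difficulty is that such a limit forgets that the finite approximating paths \emph{reconverge}, their common endpoints having depths tending to infinity; retaining this information to reach a contradiction is the crux, and is presumably where one must invoke the structure theory of highly arc transitive digraphs of finite out-valency (the reachability relation and property $Z$ of \cite{cameron}, together with the analysis in \cite{amato1}) rather than a bare-hands argument.
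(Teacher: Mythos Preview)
The paper does not actually prove this theorem in any detail: it asserts that (T2) and (T3) follow immediately from high arc transitivity, and for (T1) and (T4) it simply cites results from \cite{amato1} (Lemma~3.1 and Proposition~4.7(a) respectively). So there is no real comparison to be made at the level of technique---the paper outsources the work.

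Your arguments for (T1)--(T3) are correct and self-contained, and in that sense go beyond what the paper offers. The (T1) argument is clean: the orbit argument forcing $\desc^s(\alpha)=\desc^t(\alpha)$ whenever they meet, followed by eventual periodicity and the finiteness contradiction, is essentially the content of the cited lemma in \cite{amato1}. For (T2) and (T3) your one-line derivations match what the paper means by ``immediate.''

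For (T4) there is a genuine gap, which you yourself identify. Your reduction to the lemma on internally disjoint path pairs is correct (the maximal-common-ancestor construction and the disjointness check both go through, and the length count $d+l>l$ is right). But the lemma is the whole difficulty, and your compactness sketch does not prove it: as you say, passing to a limit of ever-longer path pairs loses exactly the reconvergence information that distinguishes them from a mere pair of disjoint rays, and nothing in the bare hypotheses rules out the latter. The argument in \cite{amato1}, to which the paper defers, does not proceed via compactness; it goes through a quantitative analysis of the growth of the level sizes $|\desc^s(\alpha)|$ and the in-valencies within $\Gamma$, using (T1)--(T3) to show these stabilise in a way that bounds how far back a predecessor of a deep vertex can fail to lie in $\desc(x)$. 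Your write-up correctly anticipates that the structure theory of \cite{amato1} is where the substance lies, but does not supply a substitute for it.
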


\begin{proof} Properties (T2) and (T3) follow immediately from high arc transitivity of $D$. Property (T1) is proved in Lemma 3.1 of \cite{amato1} and (T4) is deduced from (T1), (T2), (T3) in (\cite{amato1}, Proposition 4.7(a)). 
\end{proof}

\medskip

\begin{remark} \rm The paper \cite{amatode} shows that there are only countably many isomorphism types of digraphs $\Gamma$ satisfying properties (T1, T2, T3). In fact, the same is true with (T3) replaced by the weaker:
\begin{enumerate}
\item[(G3)] There is a natural number $k$ such that if $\ell \geq k$ and $x \in \desc^{\ell}(\alpha)$ and $\beta \in \desc^1(\alpha)$, then $\desc(\beta) \cap \desc(x) \neq \emptyset$ implies $x \in \desc(\beta)$.
\end{enumerate}
Moreover, these (T1, T2, G3) imply (T4). 
See Corollary 1.5 and Lemma 2.1 of \cite{amatode} for proofs.
\end{remark}

Explicit examples $\Gamma(\Sigma, k)$ of digraphs satisfying (T1, T2, T3) (and which are not trees) are constructed in Section 5 of \cite{amato1} and  constructions of  highly arc transitive, but not descendant-homogeneous, digraphs with these as descendant sets are  given in \cite{amato1} and \cite{amato2}. The construction we give here (using Theorem \ref{desccond}) gives a highly arc transitive, descendant-homogeneous digraph with descendant set $\Gamma(\Sigma, k)$ (and which does not have property $Z$). Indeed, it is a slightly curious corollary of the results of this section that if $\Gamma$ is a digraph of finite out-valency which is the descendant set of a vertex in an infinite, highly arc transitive digraph, then there is a descendant-homogeneous, highly arc transitive digraph which has $\Gamma$ as its descendant set.

\begin{corollary}\label{extension}
Suppose $\Gamma$ is a digraph of finite out-valency which satisfies conditions (T1, T2, T4). Then there is a countable, vertex transitive, descendant-homogeneous digraph in which all descendant sets are isomorphic to $\Gamma$.
\end{corollary}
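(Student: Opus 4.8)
The goal is to deduce Corollary~\ref{extension} from Theorem~\ref{desccond}, so the whole task reduces to verifying that a digraph $\Gamma$ satisfying (T1, T2, T4) also satisfies conditions (C1) and (C2). Condition (C1) is literally the same as (T2), so nothing is to be done there. The entire content of the proof is therefore the verification of the countable-index condition (C2): if $X$ is a finitely generated $\leq$-subdigraph of $\Gamma$, then the subgroup of $\Aut(X)$ consisting of automorphisms that extend to automorphisms of $\Gamma$ has countable index in $\Aut(X)$.

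\textbf{Key steps.} First I would fix a finite generating set $x_1,\dots,x_r$ of $X$, so $X = \desc_\Gamma(x_1)\cup\cdots\cup\desc_\Gamma(x_r)$. The strategy is to show that an automorphism of $X$ is determined, up to countably many choices, by its effect on a finite ``core'' of $X$; then the stabilizer of that core in $\Aut(X)$ will have countable index and will consist (modulo finitely many cosets) of maps extending to $\Gamma$. Concretely, I would use (T1) (which gives the grading $\desc^s(\alpha)$ of $\Gamma$, hence of $X$) together with (T4): the constant $N=N_\Gamma$ controls how far ``back'' overlaps between descendant sets can reach. The point of (T4) is that for $l>N$, membership $b\in\desc^l(x)\cap\desc^1(a)$ forces $a\in\desc(x)$; iterating/combining this across the finitely many generators $x_i$ of $X$ shows that all the ``branching and merging'' structure of $X$ is confined to a ball $B^K(x_i)$ of some radius $K$ depending only on $X$ and $N_\Gamma$, and that beyond this ball $X$ looks like a disjoint union of copies of the ``deep'' part of $\Gamma$. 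An automorphism of $X$ must preserve the grading and the finite set of generators up to the action on that finite core $C = \bigcup_i B^K(x_i)$; since $C$ is finite, the pointwise stabilizer of $C$ has finite index in $\Aut(X)$. So it is enough to show that an automorphism of $X$ fixing $C$ pointwise lies, up to countably many choices, in the image-of-$\Aut(\Gamma)$ subgroup.

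For that last point I would argue as follows: an automorphism $\phi$ of $X$ fixing $C$ pointwise permutes the ``dangling'' subtrees-of-$\Gamma$ hanging off $C$; each such piece is a $\leq$-subdigraph of $\Gamma$ isomorphic to some $\desc_\Gamma(v)\cong\Gamma$ (by (T2)), and $\phi$ restricted to such a piece is an automorphism of a copy of $\Gamma$ rooted at a point of $C$. Using that $\Aut(\Gamma)$ is transitive on each $\desc^s(\alpha)$ (actually only (T4) plus (T2) is assumed, but the Remark notes (T1,T2,G3) imply (T4), and homogeneity of $\Gamma$ at the level we need comes from $\desc(v)\cong\Gamma$), each such restriction extends to an automorphism of the corresponding copy of $\Gamma$, and these extensions can be glued to an automorphism of all of $\Gamma$ agreeing with $\phi$ on $X$, \emph{provided} the gluing is consistent on the overlaps inside $C$ — which it is, since $\phi$ fixes $C$ pointwise. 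Thus every automorphism of $X$ fixing $C$ pointwise extends to $\Gamma$, giving (C2) with the index bounded by $[\Aut(X):\Aut(X)_{(C)}]$, which is finite. Then Theorem~\ref{desccond} applies and yields the desired descendant-homogeneous $M$ with all descendant sets $\cong\Gamma$.

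\textbf{Main obstacle.} The delicate part is making precise the claim that the ``branching/merging'' of $X$ is confined to a finite ball, i.e.\ extracting from (T4) a clean structural description of finitely generated $\leq$-subdigraphs of $\Gamma$ far from the generators — in particular that two descendant sets $\desc_X(x_i), \desc_X(x_j)$ either are eventually equal or eventually disjoint, with the ``eventually'' uniform in terms of $N_\Gamma$. Once that structure theorem is in hand, both the finite-index reduction to $\Aut(X)_{(C)}$ and the extension/gluing argument are routine. I expect the bookkeeping around the grading $\desc^s$ and repeated application of (T4) across pairs of generators to be the only genuinely technical point; everything else is a direct invocation of (T2), (C1)$=$(T2), and Theorem~\ref{desccond}.
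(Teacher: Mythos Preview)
Your overall strategy is exactly the paper's: invoke Theorem~\ref{desccond}, note (C1) is (T2), and for (C2) show that every automorphism of $X$ fixing a finite ball $Y=\bigcup_i B^{N_\Gamma}(x_i)$ pointwise extends to $\Gamma$. Where you diverge---and where there is a genuine gap---is in how you carry out that extension. You try to decompose $X$ beyond $Y$ into ``dangling'' copies of $\Gamma$, extend on each piece, and glue; you then worry (rightly) that this requires a structural description of how the $\desc(x_i)$ merge, and you are implicitly reaching for (T3), which is not among the hypotheses. None of this is needed, and as written your gluing step is not well-defined: the pieces you describe are already inside $X$, so extending $\phi$ on them does nothing toward defining $\phi$ on $\Gamma\setminus X$.

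The paper's insight is much simpler: extend $\gamma\in\Aut(X)_{(Y)}$ to $\Gamma$ by the \emph{identity} on $\Gamma\setminus X$, i.e.\ set $\theta=\gamma\cup\mathrm{id}_{\Gamma\setminus X}$. The only thing to check is that $\theta$ preserves edges crossing between $\Gamma\setminus X$ and $X\setminus Y$, and (T4) disposes of this in one line: if $b\in X\setminus Y$ then $b\in\desc^l(x_i)$ for some $l>N_\Gamma$, so any predecessor $a$ of $b$ in $\Gamma$ satisfies $a\in\desc(x_i)\subseteq X$. Hence there are \emph{no} edges between $\Gamma\setminus X$ and $X\setminus Y$ at all, and $\theta$ is automatically an automorphism. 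Your ``main obstacle'' (a uniform eventual-equal-or-disjoint statement for the $\desc(x_i)$) simply does not arise.
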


\medskip

\begin{proof} We use Theorem \ref{desccond}. The digraph $\Gamma$ satisfies condition (C1) of this, by assumption (T2). So it remains to show that $\Gamma$ satisfies (C2).

Let $X$ be a finitely generated subdigraph of $\Gamma$ with minimal generating set $\{x_1, \ldots,x_k\}$. Let  $N=N_{\Gamma}$ as in (T4). We will show that any automorphism of $X$ fixing pointwise the union of balls $Y:=\bigcup_{i=1}^{k}B^N(x_i)$  extends   to an automorphism of $\Gamma$. As these automorphisms form a subgroup of finite index in $\Aut(X)$, condition (C2) follows.

Let $a,b \in \Gamma$. We first observe that if $b \in X \setminus Y$ and $a$ is a predecessor of $b$ in $\Gamma$, then $a \in X$. Indeed, $b \in {\rm desc}^l(x_i)$ for some  $l>N$ and $ i \in \{1, \ldots, k\}$. Then by definition of $N$, $a \in {\rm desc}(x_i)$. Since ${\rm desc}(x_i)  \subseteq X$, it follows that $a \in X$.

Let  $\gamma$ be an automorphism of $X$ which fixes $Y$ pointwise. Define $\theta=\gamma \cup id_{\Gamma \setminus X}$. To prove $\theta$ is an automorphism of $\Gamma$ we must show  that $\theta$ preserves edges and non-edges. For $u \in (\Gamma\setminus X) \cup Y$, $\theta u =u$ and for $u \in X$, $\theta u=\gamma u$. So for $a,b \in (\Gamma \setminus X) \cup Y$, we have $\theta(a,b)=(\theta a, \theta b)=(a,b)$. Similarly, ${\theta}$ preserves edges and non-edges when $a,b \in X$ as in this case, $\theta (a,b)=\gamma(a,b)$.  Now suppose $a \in \Gamma \setminus X$ and $b \in X \setminus Y$. The image $\theta(a,b)=(\theta a, \theta b)=(a,\gamma b)$. Since $\gamma$ preserves $Y$, $\gamma b \in X\setminus Y$. Then by the observation above, $(a,b)$ and $(a,\gamma b)$ are non-edges. 
\end{proof}

\end{document}